\newlength{\hchng}
\newlength{\vchng}
\newtheorem{thm}{Theorem}[section]
\newtheorem{prop}[thm]{Proposition}
\newtheorem{cor}[thm]{Corollary}
\newtheorem{lemma}[thm]{Lemma}
\newtheorem{preremark}[thm]{Remark}
\newenvironment{remark}{\begin{preremark}\rm}{\medskip \end{preremark}}
\numberwithin{equation}{section}
\newcommand{\norm}[1]{\left\Vert#1\right\Vert}
\newcommand{\abs}[1]{\left\vert#1\right\vert}
\newcommand{\R}{\mathbb R}
\newcommand{\eps}{\varepsilon}
\newcommand{\grad} {\nabla}
\newcommand{\lap} {\triangle}
\newcommand{\dd} {\; \mathrm{d}}
\DeclareMathOperator{\dv}{div}
\title{On the differentiability of the solution to an equation with drift and fractional diffusion}
\author{Luis Silvestre}
\begin{document}
\maketitle

\begin{abstract}
We consider an equation with drift and either critical or supercritical fractional diffusion. Under a regularity assumption for the vector field that is marginally stronger than what is required for H\"older continuity of the solutions, we prove that the solution becomes immediately differentiable with H\"older continuous derivatives. Therefore, the solutions to the equation are classical.
\end{abstract}

\section{Introduction}

The purpose of this paper is to study the regularity assumptions needed on a vector field $b$ for the solutions to an equation with drift and fractional diffusion
\begin{equation} \label{e:dd}
u_t + b \cdot \grad u + (-\lap)^s u = f
\end{equation}
to be differentiable (and therefore classical). We do \textbf{not} assume the vector field $b$ to be divergence free.

We will concentrate in the so called supercritical case $s \in (0,1/2]$. The case $s=1$ is usually referred to as the critical case, and $s \in (0,1/2)$ as the supercritical regime because the drift term is of higher order than the diffusion term. In \cite{silvestrePreprintHolder} it is shown that if $b \in C^{1-2s}$ in space for $s<1/2$ or $b \in L^\infty$ for $s=1/2$, then $u$ becomes H\"older continuous for positive time. In this paper we prove that with the slightly better regularity assumption $b \in C^{1-2s+\alpha}$ for any $\alpha \in (0,2s)$, we obtain that the solution $u$ becomes $C^{1,\alpha}$ in space. This is large jump in regularity for a seemingly minimal extra regularity assumption in $b$. Note that $b \in C^{1-2s}$ is the assumption that matches the scaling of the equation. A slightly better assumption like $b \in C^{1-2s+\alpha}$ allows us to use local perturbative techniques and thus obtain much better regularity results.

For the case $s \geq 1/2$, a related result has recently been obtained in \cite{priola2010pathwise}, where a similar regularity for an eigenvalue problem is established.

Our main result is the following.

\begin{thm} \label{t:main}
Let $b$ be a vector field in $L^\infty([-1,0],C^{1-2s+\alpha}(B_1))$ and $f \in L^\infty([-1,0],C^{1-2s+\alpha}(B_1))$ for some $\alpha \in (0,2s)$, then any bounded solution $u$ of \eqref{e:dd} in $[-1,0] \times B_1$ is $C^{1,\alpha}$ in space. Moreover, an estimate holds
\[ ||u||_{L^\infty([-1/2,0],C^{1,\alpha}(B_{1/2}))} \leq C \left( ||u||_{L^\infty([-1,0] \times \R^n)} + ||f||_{L^\infty([-1,0],C^{1-2s+\alpha}(B_1))} \right) \]
where the constant $C$ depends on $s$, $n$ and $||b||_{C^{1-2s+\alpha}}$ only.
\end{thm}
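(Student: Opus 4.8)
This is a bootstrap/perturbative regularity argument. We already know from \cite{silvestrePreprintHolder} that $u$ is $C^\gamma$ in space for some small $\gamma>0$. The goal is to iterate a gain-of-regularity lemma: if $u$ is $C^\beta$ in space (for $\beta$ not yet $\geq 1$), then locally $u$ is $C^{\beta + (2s - \text{ord}(b\cdot\nabla))}$-ish, and since $b\in C^{1-2s+\alpha}$ the drift term $b\cdot\nabla u$, when $u\in C^\beta$, lies in $C^{\min(1-2s+\alpha,\beta-1)}$ once $\beta>1$, and before that we treat $b\cdot\nabla u$ as a right-hand side in a negative Hölder (or Morrey-type) space. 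The engine is the Schauder theory for the constant-coefficient operator $\partial_t + (-\lap)^s$: if $\partial_t v + (-\lap)^s v = g$ with $g\in C^\delta$ in space (and suitably in time), then $v\in C^{\delta+2s}$ in space, locally, with the usual loss at non-integer-jump thresholds handled separately.

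**Key steps in order.** First I would record the interior Schauder estimate for $\partial_t + (-\lap)^s$: freeze the equation, use that the fundamental solution of $(-\lap)^s$ has the right homogeneity, and get $\|v\|_{C^{\delta+2s}(B_{1/2})}\lesssim \|g\|_{C^\delta(B_1)} + \|v\|_{L^\infty}$, with care taken because $u$ solves a nonlocal equation so the tails $\int_{|y|\geq 1}|u(x+y)-u(x)||y|^{-n-2s}\,dy$ must be controlled by $\|u\|_{L^\infty(\R^n)}$ — this is exactly why the norm on the right side of the theorem is the global $L^\infty$ norm. Second, the iteration: set $\beta_0 = \gamma$ (Hölder exponent from the known result), and define $\beta_{k+1} = \beta_k + \min(2s, \text{something})$. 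Concretely, with $u\in C^{\beta_k}$, write the equation as $\partial_t u + (-\lap)^s u = f - b\cdot\nabla u$. If $\beta_k \le 1$, then $\nabla u$ only makes sense distributionally; here I would instead work with the incremental-quotient / commutator form, or equivalently use that $b\cdot\nabla u = \dv(bu) - (\dv b)u$ is not available since $b$ isn't divergence-free — so instead estimate $b\cdot\nabla u$ by duality in $C^{\beta_k - 1}$ (a negative Hölder space) using $b\in C^{1-2s+\alpha}\subset C^{1-2s}$ and the product rule for Hölder spaces, getting $b\cdot\nabla u\in C^{\beta_k+1-2s-1} = C^{\beta_k-2s}$... which is the wrong sign. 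The correct move: apply Schauder in the scale-invariant (Besov/Hölder-Zygmund) form so that $u\in C^{\beta_k}$ and $b\in C^{1-2s}$ gives $b\cdot\nabla u\in C^{\beta_k - (1-(1-2s))} = C^{\beta_k - 2s}$, hence $u$ gains back exactly $2s$: net zero. The extra $\alpha$ in $b\in C^{1-2s+\alpha}$ is what makes the net gain $+\alpha$ per step, so after finitely many steps $\beta_k$ crosses $1$. Once $\beta_k>1$, $\nabla u$ is a genuine $C^{\beta_k-1}$ function, $b\cdot\nabla u\in C^{\min(1-2s+\alpha,\beta_k-1)}$, Schauder gives $u\in C^{\min(1-2s+\alpha,\beta_k-1)+2s}$, and one more (or two more) steps push the regularity of $\nabla u$ to $C^\alpha$, giving $u\in C^{1,\alpha}$ and stopping the bootstrap at the claimed exponent. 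Throughout, each step shrinks the ball ($B_1\to B_{3/4}\to\cdots$), and since the number of steps is finite and depends only on $s,\alpha$, the final ball can be taken to be $B_{1/2}$ after rescaling.

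**Time regularity and technical care.** A subtlety is that the theorem only asserts $C^{1,\alpha}$ in space, not parabolic regularity, so I can be somewhat cavalier about time: it suffices to have $u$ continuous (or bounded measurable) in time with values in the relevant spatial Hölder spaces, which follows from $f,b\in L^\infty_t C^{1-2s+\alpha}_x$ together with the smoothing of $(-\lap)^s$; one shows $u(t,\cdot)$ inherits spatial regularity uniformly in $t$ by applying the spatial Schauder estimate at (a.e.) each time slice after establishing the needed a priori control. I would also need the standard approximation remark: a priori assume $u$ is smooth (e.g. mollify the data, or work with the already-$C^\gamma$ solution and difference quotients) so that all the manipulations are justified, then pass to the limit using that the final estimate depends only on $\|u\|_{L^\infty(\R^n)}$, $\|f\|_{L^\infty_t C^{1-2s+\alpha}_x}$, and $\|b\|_{C^{1-2s+\alpha}}$.

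**Main obstacle.** The crux is the borderline/negative-regularity bookkeeping while $\beta_k\le 1$: making precise sense of the product $b\cdot\nabla u$ and of the Schauder estimate when the right-hand side sits in a negative Hölder-Zygmund space, and verifying that the net gain per iteration is genuinely $+\alpha$ and not $0$ (i.e. that the extra $\alpha$ in $b$'s regularity is not eaten by a loss at integer thresholds or by the tail terms). Handling the tail terms of the nonlocal operator — ensuring every localization step reintroduces only a harmless $\|u\|_{L^\infty(\R^n)}$ contribution and does not degrade the Hölder exponent — is the second delicate point, and is what forces the global $L^\infty$ norm (rather than a local one) into the final estimate.
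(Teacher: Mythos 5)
There is a fundamental obstruction to your bootstrap that the extra $\alpha$ of regularity on $b$ cannot repair, and it is precisely the obstruction the paper is built to circumvent. Your key bookkeeping step claims that $u\in C^{\beta_k}$ and $b\in C^{1-2s}$ give $b\cdot\grad u\in C^{\beta_k-2s}$. That is a scaling heuristic (the drift ``acts like'' an operator of order $2s$ when $b\in C^{1-2s}$), not a mapping property of the product in H\"older--Zygmund spaces. The regularity of the product $b\cdot\grad u$ is capped by the regularity of the \emph{worse} factor: $\grad u\in C^{\beta_k-1}$, and by paraproduct estimates $b\cdot\grad u\in C^{\beta_k-1}$ at best (and is only well defined as a distribution when $\beta_k-1+(1-2s+\alpha)>0$, which may fail for the small initial exponent $\gamma$ coming from the H\"older result). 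Feeding $C^{\beta_k-1}$ into the Schauder estimate for $\partial_t+(-\lap)^s$ returns $u\in C^{\beta_k-1+2s}=C^{\beta_k-(1-2s)}$: a strict \emph{loss} of $1-2s$ per iteration in the supercritical range $s<1/2$, and exactly zero gain at $s=1/2$. The same computation shows the scheme cannot terminate at $C^{1+\alpha}$ even once $\beta_k>1$: to get $b\cdot\grad u\in C^{1-2s+\alpha}$ you would need $\grad u\in C^{1-2s+\alpha}$, i.e.\ $u\in C^{2-2s+\alpha}$, which is \emph{more} than the conclusion. The extra $\alpha$ on $b$ never enters this bottleneck because the limiting factor is $\grad u$, not $b$. (There are secondary issues as well: the equation is only assumed to hold in the viscosity sense, and since $b$ is not divergence free one cannot rewrite $b\cdot\grad u$ distributionally, so the paraproduct formulation of the equation is not available for the given solutions; and the tails of $(-\lap)^s$ applied to the first-order corrections needed for a $C^{1,\alpha}$ statement diverge when $s\le 1/2$.)

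The paper's actual mechanism is different and is designed around exactly this failure. It passes to the Caffarelli--Silvestre extension to make the operator local (so no tail terms), and proves an improvement-of-flatness iteration: at scale $\rho^k$ the solution is compared to the smooth solution of the \emph{drift-less} extended problem with the same boundary data (Lemmas \ref{l:perturbation} and \ref{l:improvement-of-flatness}), and one tracks the deviation of $u$ from $A_k\cdot x+D_k(t)+\frac{D_k'(t)}{1-a}y^{1-a}$. The gain of $\rho^{k\alpha}$ per scale comes not from a product rule but from the scaling of the drift itself: after composing with the flow $\dot V=b(t,V)$ one has $|b(t,x)-b(t,V(t))|\le\delta|x-V(t)|^{1-2s+\alpha}$, so the rescaled drift $\tilde b=\rho^{(2s-1)k}b(\rho^{2sk}t,\rho^kx)$ has size $\delta\rho^{k\alpha}$, and the drift only acts on the affine part $A_k\cdot x$ (producing the controlled error $\rho^{-k\alpha}\tilde b\cdot A_k$) plus on the small remainder. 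If you want to rescue a genuinely alternative proof you would need some version of this frozen-coefficient, subtract-the-affine-approximation structure; a straight Schauder bootstrap on $\partial_t+(-\lap)^s$ with $f-b\cdot\grad u$ as the source moves in the wrong direction for $s\le 1/2$.
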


If $b \in C^{1-2s}$ and $f \in C^{1-2s}$ (case $\alpha=0$) we can prove that $u$ is almost Lipschitz in space making a mild local smallness assumption. We state that as a second theorem. The proof is contained in the proof of Theorem \ref{t:main}.
\begin{thm} \label{t:main2}
There exists a $\delta>0$ such that the following result holds. Let $b \in [-1,0] \times B_1$ be a vector field such that for some $r >0$ and any $t \in [-1,0]$,
\begin{equation} \label{e:mainassumption2}
 \sup_{|x-y|<r} \frac{|b(t,x)-b(t,y)|}{|x-y|^{1-2s}} \leq \delta.
\end{equation}
Assume also that $f \in C^{1-2s}$ in space, then any bounded solution $u$ of \eqref{e:dd} in $[-1,0] \times B_1$ is $C^\beta$ in space for all $\beta < 1$. Moreover, an estimate holds
\[ ||u||_{L^\infty([-1/2,0],C^\beta(B_{1/2}))} \leq C \left( ||u||_{L^\infty([-1,0] \times \R^n)} + ||f||_{L^\infty([-1,0],C^{1-2s}(B_1))} \right) \]
where the constant $C$ depends on $r$, $s$, $n$, $\beta$ and $||b||_{C^{1-2s}}$ only.
\end{thm}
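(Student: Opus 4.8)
The plan is to adapt the scheme behind Theorem~\ref{t:main}, using constants in place of the affine functions that appear there, and to spend the extra smallness hypothesis \eqref{e:mainassumption2} at one specific point of the argument. First I would reduce to a normalized setting: a preliminary parabolic rescaling $u(t,x)\rightsquigarrow A^{-1}u(\rho_0^{2s}t,\rho_0 x)$ maps \eqref{e:dd} to an equation of the same form, multiplies the $C^{1-2s}$ seminorm of $b$ by $\rho_0^{1-2s}\le 1$, and shrinks $\norm{f}_{L^\infty}$ and $[f]_{C^{1-2s}}$; hence we may assume $\norm{u}_{L^\infty([-1,0]\times\R^n)}\le 1$, that both $\norm{f}_{L^\infty([-1,0],C^{1-2s}(B_1))}$ and the constant $\delta$ in \eqref{e:mainassumption2} are below any fixed threshold $\delta_0$, and that \eqref{e:mainassumption2} holds with $r=1$. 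By translating, it then suffices to prove, for a fixed $\beta<1$, a scale-by-scale oscillation decay at the origin, $\osc_{B_{\rho^k}}u(t,\cdot)\le C\rho^{k\beta}$ uniformly for $t\in[-\rho^{2sk},0]$ and all $k$; the interior estimate on $[-1/2,0]\times B_{1/2}$ follows by covering.

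The heart of the matter is a diminish-of-oscillation lemma: there exist $\rho\in(0,1)$ and $\delta_0>0$ (depending on $n,s,\beta$) such that if $u$ solves \eqref{e:dd} in $[-1,0]\times B_1$, $\osc_{B_1}u(t,\cdot)\le 1$ for all $t$, $u$ obeys the scale-by-scale tail control produced by the earlier steps, and $\norm{f},\delta\le\delta_0$, then $\osc_{B_\rho}u(t,\cdot)\le\rho^\beta$ for all $t\in[-\rho^{2s},0]$. I would prove this by \textbf{freezing the drift at the origin}: let $v$ solve $v_t+b(t,0)\cdot\grad v+(-\lap)^s v=0$ in $[-1,0]\times B_{1/2}$ with the same exterior and initial data as $u$. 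The time-dependent translation $x\mapsto x-\int_0^t b(\tau,0)\,\mathrm{d}\tau$ converts $v$ into a solution of the plain fractional heat equation, so interior estimates (depending only on the normalization $\norm{u}_{L^\infty}\le 1$) give that $v$ is smooth in $[-1/2,0]\times B_{1/4}$, with $\osc_{B_\rho}v(t,\cdot)\le C_0\rho$. The difference $w=u-v$ satisfies
\[ w_t+b(t,x)\cdot\grad w+(-\lap)^s w=-\bigl(b(t,x)-b(t,0)\bigr)\cdot\grad v+f, \]
and because $u$ and $v$ agree outside $B_{1/2}$ the (possibly large) far-field parts of $(-\lap)^s$ cancel, so the right-hand side has $L^\infty$ norm at most $C\bigl(\delta\,\norm{\grad v}_{L^\infty(B_{1/4})}+\norm{f}_{L^\infty}\bigr)\le C\delta_0$. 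Since the drift $b\cdot\grad w$ does not spoil comparison, a barrier argument (using $|w|\le 2$ outside $B_{1/2}$) bounds $\norm{w}_{L^\infty([-\rho^{2s},0]\times B_\rho)}$ by $C\delta_0$, whence $\osc_{B_\rho}u(t,\cdot)\le C_0\rho+C\delta_0\le\rho^\beta$ once $\rho$ is small (using $\beta<1$, so $\rho^{1-\beta}\le 1$) and then $\delta_0$ is chosen small depending on $\rho$.

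To iterate, set $u_1(t,x):=\rho^{-\beta}\bigl(u(\rho^{2s}t,\rho x)-c_1\bigr)$ with $c_1$ the center of the oscillation bound; then $u_1$ again solves an equation of the form \eqref{e:dd}, the new field $b_1(t,x)=\rho^{2s-1}b(\rho^{2s}t,\rho x)$ has the \emph{same} $C^{1-2s}$ seminorm and oscillation modulus (this is precisely why $b\in C^{1-2s}$ is the scaling-critical assumption), and the new source obeys $[f_1]_{C^{1-2s}}\le\rho^{1-\beta}[f]_{C^{1-2s}}$. One checks that the tail control is reproduced for $u_1$ from the oscillation bounds already obtained at all larger scales together with $|c_{j+1}-c_j|\le C\rho^{j\beta}$; in the supercritical range some extra care is needed to keep the effective source in the correct scaling class (e.g.\ by also subtracting the explicit profile solving $(-\lap)^s\phi=f(t,0)$). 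Iterating yields $\osc_{B_{\rho^k}}u(t,\cdot)\le\rho^{k\beta}$ for all $k$ and $t\in[-\rho^{2sk},0]$, so $u(t,\cdot)\in C^\beta$ near the origin uniformly in $t$, with constants depending only on $r,s,n,\beta$ and $\norm{b}_{C^{1-2s}}$; unwinding the normalizations gives the stated bound (and running the same argument with $f\in C^{1-2s+\alpha}$ and affine corrections recovers the $C^{1,\alpha}$ conclusion of Theorem~\ref{t:main}).

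I expect the main obstacle to be the nonlocal tail in the supercritical regime. Since the drift is of higher order than the diffusion, the frozen comparison must use exactly matching exterior data, so that the far-field contribution cancels rather than having to be estimated, and one must carry through the entire iteration a quantitative control of $u$ at every scale so that the interior estimates for the frozen solution do not deteriorate; it is exactly the mild smallness in \eqref{e:mainassumption2} that turns the residual drift $\bigl(b(t,x)-b(t,0)\bigr)\cdot\grad v$ into a genuine perturbation of the diffusion. The borderline nature of the case $\beta\to 1$, where the constants blow up, is what confines the conclusion to ``$C^\beta$ for every $\beta<1$'' rather than to Lipschitz or $C^1$.
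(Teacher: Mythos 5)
Your overall strategy (approximate by a smooth solution of a frozen/driftless problem, subtract the approximating object, rescale, iterate) is the same perturbative scheme the paper uses; the paper's actual proof of this theorem is literally ``run the proof of Theorem \ref{t:main} with $\alpha=0$ and $A=0$'', so all the content is in the machinery of Sections \ref{s:perturbation}--\ref{s:mainProof}. The essential difference is that you work directly with $(-\lap)^s$, whereas the paper insists on the Caffarelli--Silvestre extension, and this is not a cosmetic choice: in the supercritical range $s<1/2$ your scheme breaks down precisely for the exponents $\beta\in(2s,1)$ that make the theorem interesting.

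There are two concrete failures, both stemming from the fact that the kernel of $(-\lap)^s$ only decays like $|z|^{-n-2s}$. First, the tails. At step $k$ the renormalized function $u_k(t,x)=\rho^{-k\beta}(u(\rho^{2sk}t,\rho^k x)-c_k)$ is controlled only by $|u_k(x)|\leq C|x|^\beta$ for $1\leq|x|\leq\rho^{-k}$ (from the oscillation bounds at the larger scales) and by $C\rho^{-k\beta}$ beyond; hence $\int_{|z|>1}|u_k(x+z)|\,|z|^{-n-2s}\,\dd z\approx\rho^{-k(\beta-2s)}\to\infty$ when $\beta>2s$. Matching the exterior data of $v_k$ with that of $u_k$ does make $(-\lap)^s(u_k-v_k)$ harmless, but it does nothing for $v_k$ itself: the interior derivative bounds for the Dirichlet problem of the fractional heat equation are controlled by exactly this weighted tail of the \emph{exterior data}, so your constant $C_0$ in $\osc_{B_\rho}v_k(t,\cdot)\leq C_0\rho$ degrades geometrically in $k$ and the iteration does not close. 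Second, the time slopes. Since $\partial_t v_k=-(-\lap)^s v_k=O(1)$, over a time interval of length $\rho^{2s}$ the solution moves by $\approx\rho^{2s}\gg\rho^\beta$ when $\beta>2s$, so no time-independent constant $c_{k+1}$ can achieve $|u_k-c_{k+1}|\leq\rho^\beta$ on the whole cylinder; you are forced to subtract a time-dependent $D(t)$, which reintroduces a source $-D'(t)=O(1)$ that rescales to $\rho^{k(2s-\beta)}D'\to\infty$. Your parenthetical fix (subtracting a profile with $(-\lap)^s\phi=f(t,0)$) does not exist in bounded form: the natural candidate $|x|^{2s}$ has a logarithmically divergent fractional Laplacian. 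The paper's extension resolves both problems at once: the equation becomes local (no tails, and the approximation Lemma \ref{l:perturbation} and the interior estimates of Proposition \ref{p:fh-cinfinity} need only $\|u\|_{L^\infty([-1,0]\times B_1^+)}$), and the subtracted object is $D(t)+\frac{D'(t)}{1-a}y^{1-a}$, whose Neumann term $\lim_{y\to0}y^a\partial_y$ exactly cancels $D'(t)$. Without the extension (or an equivalent device) your iteration only proves $C^\beta$ for $\beta<2s$, which falls short of the statement.
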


The focus of this paper is on the regularity estimates for the solution $u$ and not on the existence or uniqueness for a given vector field $b$ and initial value $u(x,0) = u_0$. It is our intention to use this result as a useful criteria to conclude that a solution is classical that could be applied to a variety of nonlinear equations with drift and fractional diffusion. Some applications are:
\begin{itemize}
\item It implies the result in \cite{constantin2008regularity} about a regularity criteria for the supercritical surface quasi-geostrophic equation.
\item It complements \cite{chan2009eventual} and \cite{kiselev2011nonlocal} to prove that the solutions to the supercritical Burgers equation become classical for large time, instead of only H\"older continuous.
\item Combined with \cite{silvestrePreprintHolder}, it implies the result in \cite{gautam2008}. Moreover, it proves a more general version of the result where $u = L(-\lap)^{-s}\theta$ for any operator $L$ of order zero (any combination of Riesz transforms or singular integrals), without assuming that $\dv u=0$.
\item For any conservation law with fractional diffusion
\[ \theta_t + \dv F(\theta) + (-\lap)^s \theta = 0 \]
with $F$ being a smooth nonlinear function, it proves that if $\theta \in C^{1-2s+\alpha}$, then actually $\theta \in C^{1,\alpha}$ and it is a classical solution.
\item For the Hamilton Jacobi equation with fractional diffusion
\[ u_t + H(\grad u) + (-\lap)^s u = 0, \]
if $H$ is smooth, it implies that if $u \in C^{1,1-2s+\alpha}$ for some $\alpha>0$ then, applying Theorem \ref{t:main} to the directional derivatives of $u$,  $u \in C^{2,\alpha}$ for all $\alpha < 2s$. In the case of $s=1/2$, this provides an improvement of the regularity obtained in \cite{silvestre2009differentiability}.
\end{itemize}

We also plan to use this result in future work for obtaining partial regularity results for nonlinear problems for which some decay can be proved. For example, we expect that the methods in this paper would allow us to improve the results from \cite{silvestre2009eventual} and \cite{chan2009eventual}, although these two results have recently been improved with different methods in \cite{dabkowskieventual} and \cite{kiselev2011nonlocal}.

We stress that there is no assumption on the divergence of $b$ for either Theorem \ref{t:main} or Theorem \ref{t:main2}. This allows us to apply the result to a larger class of equations. Moreover, there is no energy inequality for equation \eqref{e:dd} and variational methods do not seem to be suitable for a proof.

The idea of the proof is to approximate locally the solution to the drift-diffusion equation by a solution to a drift-less problem via a change of variables (which is equivalent to approximating the solution locally by the solution to a problem with constant drift). The drift-less problem has $C^\infty$ solutions, so we use it to show that there is a plane from which the solution separates slowly. A precise estimate on this separation leads to the $C^{1,\alpha}$ estimate. This general idea goes back to the classical result of Cordes \cite{cordes1956} and Nirenberg \cite{nirenberg1954} for second order elliptic equations. For stationary integro-differential equation of order larger than one, it was developed in great generality in \cite{caffarelli2009approx}. There is a difficulty that arises when trying to apply this method to integral equations of order one or less. The problem is that the affine functions that are used in the approximations have a linear growth at infinity that already makes the tails of the integrals divergent when $s \leq 1/2$. In this paper this difficulty is overcome by rewriting the equation using the extension from \cite{caffarelli2007extension} and complementing the affine function in the original variable $x$ with an extra term in the variable $y$ with appropriate homogeneity (depending on $s$).

The organization of the paper is as follows. In section \ref{s:extension} we explain the extended problem. The value $1+2s$ appears as the order of the second term in the expansion of solutions to the extended problem, and that is the main reason of the constraint $\alpha < 2s$ in Theorem \ref{t:main} even for $f=0$. In section \ref{s:weaksolutions} we discuss the applicability of Theorem \ref{t:main} to viscosity solutions. Essentially, the proof uses only the comparison principle with smooth sub and supersolutions, and thus it adapts perfectly to the case of viscosity solutions. In sections \ref{s:interiorEstimatesForFHE} and \ref{s:boundaryEstimatesForFHE}, we develop the necessary regularity estimates for the drift-less problem. These estimates are not difficult and they take a large proportion of this article. But they need to be proved because the fractional heat equation is not classical in its extended version. The proofs in section \ref{s:interiorEstimatesForFHE} and \ref{s:boundaryEstimatesForFHE} use essentially the same ideas as the usual heat equation and hence they are very standard. On a first read of this paper, it would be convenient to just skim through the results in these two sections. In section \ref{s:holderEstimates}, the H\"older estimates from \cite{silvestrePreprintHolder} are adapted to the extended problem. Sections \ref{s:perturbation}, \ref{s:improvementOfFlatness} and \ref{s:mainProof} are where the most important work of this paper is carried out specifically in order to prove Theorem \ref{t:main}.

\noindent \textbf{Notation: }
\begin{itemize}
\item The variable $x$ will always be used for a point in $\R^n$ and $y$ to be the extra coordinate in $\R^+$ for the extension to the upper half space. Sometimes the capital letter $X$ will be used to denote a point in the upper half space $\R^n \times \R^+$. Thus $X = (x,y)$.
\item By $B_1^+$ we denote the half ball centered at $(0,0)$ in the upper half space. i.e. $B_1^+ = \{ (x,y) \in \R^n \times \R^+ : |x|^2 + y^2 < 1 \}$.
\item By $B_1^0$ we denote the unit ball either in the original space $\R^n$ or in the boundary of the upper half space $\R^n \times \{0\}$.
\item By $(\partial B_1)^+$ we denote the upper half part of the boundary of the unit ball $\partial B_1$ for which $y>0$. Thus $(\partial B_1)^+ = \partial B_1 \cap \{y>0\}$. Also $\partial B_1^+ = B_1^0 \cup (\partial B_1)^+$.
\end{itemize}

\section{The extended problem}
\label{s:extension}
In \cite{caffarelli2007extension}, the fractional Laplacian was characterized as a Dirichlet to Neumann problem for a degenerate elliptic equation. More precisely, given a function $f \in \R^n \to \R$, we can compute $(-\lap)^s f$ by solving the following Dirichlet problem in the upper half space
\begin{equation} \label{e:extension}
\begin{aligned}
u(x,0) &= f(x) \ \ \text{in } \R^n \\
\dv y^a \grad u &= 0 \ \ \text{in } \R^n \times \R^+
\end{aligned}
\end{equation}
Then $(-\lap)^s f = c\lim_{y \to 0} y^a \partial_y u$. Here $\grad u$ stands for the full gradient respect to the original variable $x \in R^n$ and the extended variable $y \in \R^+$. The constant $a$ is equal to $1-2s$. The most effective way to remember the relation between $a$ and $s$ is to use dimensional analysis. The order of the operator $(-\lap)^s$ is $2s$, whereas $y^a \partial_y u$ is of order $1-a$ ($-a$ because of $y^a$ and $+1$ because of $\partial_y$), thus $2s=1-a$.

Note that since in this paper we focus on the case $s \in (0,1/2)$, the value of $a$ stays in the range $a \in (0,1)$.

We rewrite the equation \eqref{e:dd} using the characterization of the fractional Laplacian as a Dirichlet to Neumann operator given in \cite{caffarelli2007extension}.

\begin{equation} \label{e:dd-extended}
\begin{aligned}
u_t(t,x,0) + b(t,x) \cdot \grad_x u - c \lim_{y \to 0} y^a \partial_y u(t,x,y) &= f(t,x) && \text{for } x \in \R^n \text{ and } t \in (0,\infty) \\
\dv y^a \grad u &= 0 && \text{for } y > 0
\end{aligned}
\end{equation}

The multiplicative constant $c$ depends on the value of $s$ (or, equivalently, on $a$). In terms of the smoothness of the solution of \eqref{e:dd-extended}, the value of $c$ is irrelevant, since it can be modified with a simple change of variables. So we will study the equation \eqref{e:dd-extended} with $c=1$.

The usefulness of this construction comes from the fact that the equation has become local. There is no integro-differential operator in \eqref{e:dd-extended}, so the usual difficulty of keeping track of errors coming from the tails of the integrals that one has to deal with when studying regularity of integro-differential equations disappears in \eqref{e:dd-extended}.

In the next lemmas, we will explore the behavior close to $y=0$ of the solution to \eqref{e:extension} when the original function $f$ is smooth.

It is useful to remember some special harmonic functions for the extension \eqref{e:extension}. The following list plays the role of the first few harmonic polynomials.

\begin{itemize}
\item For any constant vector $A \in \R^n$, the function $u(x,y)= A \cdot x$ solves \eqref{e:extension} and $\lim_{y \to 0} y^a \partial_y u = 0$. 
\item The function $u(x,y) = \frac 1 {1-a} y^{1-a}$ solves \eqref{e:extension} and $\lim_{y \to 0} y^a \partial_y u = 1$. 
\item For any constant vector $A \in \R^n$, the function $u(x,y) = \frac 1 {1-a} y^{1-a} \ A \cdot x$ solves \eqref{e:extension} and $\lim_{y \to 0} y^a \partial_y u = A \cdot x$. 
\item The function $u(x,y)= x^2 - \frac{n}{1+a} y^2$ solves \eqref{e:extension} and $\lim_{y \to 0} y^a \partial_y u = 0$. 
\end{itemize}

Functions of the form $A \cdot x + \frac B {1-a} y^{1-a}$ play the role of the linear harmonic functions. Note that the next \emph{harmonic polynomial} would be of the form $A \cdot x \ \frac 1 {1-a} y^{1-a}$, which has degree $2-a = 1+2s$. The following lemmas show that a first order approximation of smooth functions $f$ close to the boundary in the extension problem \eqref{e:extended} has an error of order $1+2s$. This is the reason why in our main theorem, even if the right hand side is zero, we do not obtain a regularity estimate of the solution $u$ in the space $C^{1,2s}$ or better.

\begin{lemma} \label{l:ext-fglobal}
 Let $f: \R^n \to \R$ be a bounded function, $f \in C^{1,1}(B_1)$, and $u$ be its extension to the upper half space:
\begin{align*}
u(x,0) &= f(x) \ \text{on } \R^n \times \{0\} \\
\dv y^a \grad u&= 0 \qquad \text{in } \R^n \times (0,+\infty) 
\end{align*}
Then, for $x \in B_{1/2}$, $u$ has the following expansion
\begin{equation} \label{e:ext-global-expansion}
u(x,y) = f(x) + y^{1-a} g(x) + O(y^2).
\end{equation}
More precisely, the error in the expansion is bounded by $C y^2 (||f||_{L^\infty(\R^n)} + ||f||_{C^{1,1}(B_1)})$ for a constant $C$ depending only on $a$ and dimension.
\end{lemma}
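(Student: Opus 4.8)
The plan is to split the extension $u$ into a term that captures the boundary data exactly and a remainder that we control by the degenerate-elliptic equation. Since the statement is local (only $x \in B_{1/2}$ is claimed), the first step is to localize: write $f = f \chi + f(1-\chi)$ for a cutoff $\chi$ equal to $1$ on $B_{3/4}$ and supported in $B_1$, and note by linearity that the extension of $f(1-\chi)$ is smooth and bounded in $B_{1/2}^+$ up to the boundary (its boundary datum vanishes near $B_{3/4}$, so the standard interior-type estimates for $\dv y^a \grad \cdot = 0$ give smoothness there, with the $y^{1-a}$ and $O(y^2)$ terms being trivially absorbed). Hence we may assume $f \in C^{1,1}(\R^n)$ with $\|f\|_{C^{1,1}(\R^n)} \lesssim \|f\|_{L^\infty} + \|f\|_{C^{1,1}(B_1)}$.

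Next I would construct an explicit comparison function. Fix $x_0 \in B_{1/2}$ and use the second-order Taylor polynomial of $f$ at $x_0$, which exists because $f \in C^{1,1}$; write $f(x) = P_{x_0}(x) + R_{x_0}(x)$ where $P_{x_0}$ is a quadratic polynomial and $|R_{x_0}(x)| \le C\|f\|_{C^{1,1}}|x-x_0|^2$. Using the explicit ``harmonic polynomials'' listed in the excerpt — $A\cdot x$, the function $x^2 - \tfrac{n}{1+a}y^2$, and $\tfrac{1}{1-a}y^{1-a}$ — I can build the extension $\tilde P_{x_0}(x,y)$ of $P_{x_0}$ exactly: it equals $P_{x_0}(x)$ plus a term of the form $c\,|x_0|\text{-independent constant}\cdot y^2$ (coming from the trace of the Hessian), and crucially it has no $y^{1-a}$ term because the trace of $P_{x_0}$ itself does. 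So the $y^{1-a}g(x)$ piece of the expansion can only come from the remainder part. Define $v := u - \tilde P_{x_0}$; then $v$ solves $\dv y^a \grad v = 0$ in the upper half space and $v(x,0) = R_{x_0}(x)$, with $|v(x,0)| \le C\|f\|_{C^{1,1}}\min(|x-x_0|^2, \text{const})$ globally (after using the localization of the first paragraph to control far-away contributions crudely by $\|f\|_{L^\infty}$).

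The core estimate is then: if $w$ solves the extension problem with boundary data $h$ satisfying $|h(x)| \le |x-x_0|^2$ near $x_0$ and $|h| \le M$ globally, then near $(x_0,0)$ one has $w(x,y) = h(x_0) + \text{(linear in }x) + y^{1-a}g + O(y^2)$ with the implied constant controlled by $M$. I would prove this via the Poisson kernel for the extension from \cite{caffarelli2007extension}: $w(x,y) = \int_{\R^n} P(x-z,y)h(z)\,dz$ with $P(x,y) = c_{n,a}\, y^{1-a}(|x|^2+y^2)^{-(n+1-a)/2}$. Differentiating under the integral and using the quadratic vanishing of $h$ at $x_0$ together with the scaling of $P$, one finds that $w(x_0,y) - h(x_0) = y^{1-a}g(x_0) + O(y^2)$, where $g(x_0) = c_{n,a}\int (|z|^2)^{-(n+1-a)/2} h(x_0+z)\,dz$ (a convergent integral precisely because $h$ vanishes to second order at $x_0$ and is bounded at infinity), and the $O(y^2)$ error comes from Taylor-expanding the kernel in $y$; the $x$-derivatives of $w$ at $(x_0,0)$ supply the linear term, which we lump into $f(x) + y^{1-a}g(x)$ after noting $\grad_x u(x_0,0) = \grad f(x_0)$. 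Combining with the explicit extension $\tilde P_{x_0}$ and letting $x_0$ range over $B_{1/2}$ gives the claimed pointwise expansion with the stated bound on the error.

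I expect the main obstacle to be the tail/convergence bookkeeping in the Poisson-kernel integral: making precise that $g(x)$ is well-defined and bounded (the integrand is only marginally integrable at infinity, which is exactly why one needs $h$ bounded there, hence why the $\|f\|_{L^\infty}$ term appears), and that all error terms — both from the quadratic remainder $R_{x_0}$ and from expanding $P(x-z,y)$ to second order in $y$ — genuinely collect into a single $C y^2(\|f\|_{L^\infty} + \|f\|_{C^{1,1}(B_1)})$ bound uniformly for $x \in B_{1/2}$ and $0 < y < $ some fixed radius. A cleaner alternative that avoids the kernel is to use the comparison principle directly: bound $v = u - \tilde P_{x_0} - y^{1-a}g(x_0)$ above and below by multiples of the explicit solution $x^2 - \tfrac{n}{1+a}y^2$ (suitably translated and with the right sign), which are barriers since $v$'s boundary data is squeezed between $\pm C|x-x_0|^2$; this route trades the integral estimate for the construction of barriers and is likely how the paper proceeds given its stated emphasis on comparison-principle arguments.
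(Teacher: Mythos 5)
Your core computation is the same one the paper uses: represent the extension by the Poisson kernel $c_{n,a}\,y^{1-a}(|x-z|^2+y^2)^{-(n+1-a)/2}$, identify $g(x)$ with the convergent integral $c\int (f(x+z)-f(x))|z|^{-(n+1-a)}\,dz$ (i.e.\ $-c(-\lap)^s f$), and bound the error by the integral of the kernel difference $y^{1-a}\bigl((|z|^2+y^2)^{-(n+1-a)/2}-|z|^{-(n+1-a)}\bigr)$ against data that vanishes to second order near $z=0$ and is merely bounded far away. The paper does not localize and does not subtract a Taylor polynomial as a half-space solution: it keeps everything inside one integral, kills the linear part of $f$ by the evenness of the kernel difference, and then estimates. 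Two points in your version need attention. First, your decomposition $v=u-\tilde P_{x_0}$ produces an \emph{unbounded} solution (since $\tilde P_{x_0}$ grows quadratically while $u$ is bounded), so you cannot directly assert that $v$ is given by the Poisson integral of its boundary data $R_{x_0}$; the gap is the possible discrepancy by a zero-trace solution of polynomial growth (spanned by $y^{1-a}$, $y^{1-a}x_i$, etc.). Such terms happen to have the admissible form $y^{1-a}g+O(y^2)$, but their coefficients are not a priori controlled by the stated norms, so you would need a Liouville-type classification or, more simply, the paper's device of performing the cancellation inside the integral rather than at the level of solutions.

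Second, ``the $O(y^2)$ error comes from Taylor-expanding the kernel in $y$'' is only valid for $|z|\gtrsim y$, where indeed $\bigl||z|^{n+1-a}-(|z|^2+y^2)^{(n+1-a)/2}\bigr|\le C|z|^{n-1-a}y^2$. For $|z|<y/2$ the expansion fails and one must estimate the two kernel terms separately, using $(|z|^2+y^2)^{(n+1-a)/2}\approx y^{n+1-a}$ together with the quadratic vanishing of the data; the resulting integral $\int_{B_{y/2}}|z|^2 y^{1-a}|z|^{-(n+1-a)}\,dz=Cy^2$ closes the argument. This is exactly the split the paper carries out, and it is the ``bookkeeping'' you flagged. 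Finally, your guess that the paper proceeds by barriers here is incorrect (barriers are used in Sections 4--5, not in this lemma), but that does not affect the validity of your kernel-based route once the two issues above are repaired.
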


\begin{proof}
Note that since $f$ is bounded, $u$ is bounded as well from the maximum principle. Therefore, the estimate of the expansion is trivially true for $y>1$. We need to check the case of $y$ smaller than $1$.

We compute $u$ in terms of $f$ explicitly using the Poisson kernel from \cite{caffarelli2007extension}.
\[ u(x,y) = C_s \int_{\R^n} f(x+z) \frac{y^{1-a}}{(|z|^2+y^2)^{\frac{n+1-a}{2}}} \dd z \]
We consider $g(x) = -c (-\lap)^s f$ and we have
\begin{align*}
f(x) &= C_s \int f(x) \frac{y^{1-a}}{(|z|^2+y^2)^{\frac{n+1-a}{2}}} \dd z \\
y^{1-a} g(x) &= C_s \int (f(x+z) - f(x)) \frac{y^{1-a}}{|z|^{n+1-a}} \dd z \\
\end{align*}
Therefore, we estimate the error in \eqref{e:ext-global-expansion}.
\begin{align*}
|u(x,y) - & f(x) - y^{1-a} g(x)|= \\
&= C_s \abs{\int (f(x+z) - f(x)) y^{1-a} \left( \frac 1 {(|z|^2+y^2)^{\frac{n+1-a}{2}}} - \frac 1 {|z|^{n+1-a}} \right) \dd z} \\
\intertext{Since the kernel is even, the first order part of $f$ in $B_{1/2}$ intergrates to zero and we can estimate the integral with the $C^{1,1}$ norm of $f$.}
&\leq C \int (|z|^2 \chi_{B_{1/2}} + \chi_{\R^n \setminus B_{1/2}} ) y^{1-a} \abs{ \frac {|z|^{n+1-a} - (|z|^2+y^2)^{\frac{n+1-a}{2}}} {(|z|^2+y^2)^{\frac{n+1-a}{2}} |z|^{n+1-a}} } \dd z \\
\end{align*}
In order to estimate the value of this integral, we split the domain of integration in two subdomains: $\R^n \setminus B_{y/2}$ and $B_{y/2}$.

For $|z| > y/2$, we can estimate the numerator in the kernel as
\[ \abs{|z|^{n+1-a} - (|z|^2+y^2)^{\frac{n+1-a}{2}}} \leq C |z|^{n-1-a} y^2. \]

For $z \in \R^n \setminus B_{y/2}$, we estimate the integral
\begin{align*}
C \int_{\R^n \setminus B_{y/2}} |z|^2 y^{1-a} \frac{C |z|^{n-1-a} y^2 } {(|z|^2+y^2)^{\frac{n+1-a}{2}} |z|^{n+1-a}}  \dd z &= C y^2 \int_{\R^n \setminus B_{y/2}} \frac {y^{1-a}} {(|z|^2+y^2)^{\frac{n+1-a}{2}}} \dd z \\
& = C y^2 \int_{\R^n \setminus B_{y/2}} \frac {y^{-n}} {(|z/y|^2+1)^{\frac{n+1-a}{2}}} \dd z = C y^2
\end{align*}

Finally, for $|z|< y/2$, we have
\begin{align*}
(|z|^2+y^2)^{\frac{n+1-a}{2}} &\approx  C y^{n+1-a} \\
\abs{|z|^{n+1-a} - (|z|^2+y^2)^{\frac{n+1-a}{2}}} &\approx C y^{n+1-a}
\end{align*}
Thus, we estimate
\begin{align*}
C \int_{B_{y/2}} |z|^2 y^{1-a} \abs{ \frac {|z|^{n+1-a} - (|z|^2+y^2)^{\frac{n+1-a}{2}}} {(|z|^2+y^2)^{\frac{n+1-a}{2}} |z|^{n+1-a}} } \dd z \leq C \int_{B_{y/2}}  \frac{ |z|^2 y^{1-a} } {|z|^{n+1-a}}  \dd z = Cy^2
\end{align*}
Which finishes the estimate of the error in \eqref{e:ext-global-expansion}.
\end{proof}

The following lemma is a localized version of Lemma \ref{l:ext-fglobal}.

\begin{lemma} \label{l:ext-flocal}
Let $u$ solve
\begin{align*}
u(x,0) &= f(x) \ \text{on } B_1^0 \\
\dv y^a \grad u &= 0 \qquad \text{in } B_1^+
\end{align*}
Assume that $f \in C^{1,1}(B_1^0)$ and $u \in L^2(B_1^+,y^a)$. Then $u$ has the following expansion in $B_{1/2}^+$.
\begin{equation} \label{e:ext-expansion-local}
 u(x,y) = f(x) + y^{1-a} g(x) + O(y^2)
\end{equation}
for a function $g \in C^{1+a}$ and the error is controlled by $Cy^2(||f||_{C^{1,1}} + ||u||_{L^2(y^a)})$, where $C$ depends on $n$ and $a$ only.
\end{lemma}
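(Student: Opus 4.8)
The plan is to reduce to Lemma~\ref{l:ext-fglobal} by cutting off $f$, so that the remainder solves the homogeneous extension problem with vanishing boundary data, and then to read off the behaviour of that remainder near $y=0$ from the one-dimensional structure of the equation in $y$, using interior regularity to control its $x$-derivatives. First I would fix $\eta\in C_c^\infty(B_1^0)$ with $\eta\equiv 1$ on $B_{3/4}^0$ and set $\tilde f:=\eta f$ (extended by $0$ to $\R^n$); then $\tilde f$ is bounded, $\tilde f\in C^{1,1}(\R^n)$, and $\|\tilde f\|_{L^\infty(\R^n)}+\|\tilde f\|_{C^{1,1}(\R^n)}\le C\|f\|_{C^{1,1}(B_1^0)}$. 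Let $v$ be the extension of $\tilde f$ by the Poisson kernel; then $\dv y^a\grad v=0$ in the upper half space, $v$ is bounded, and Lemma~\ref{l:ext-fglobal} gives $v(x,y)=\tilde f(x)+y^{1-a}g_v(x)+O(y^2)$ for $x\in B_{1/2}$ with error at most $Cy^2\|f\|_{C^{1,1}(B_1^0)}$, where $g_v$ is a constant multiple of $(-\lap)^s\tilde f$, hence $g_v\in C^{1+a}$ with $\|g_v\|_{C^{1+a}}\le C\|f\|_{C^{1,1}(B_1^0)}$. Since $\tilde f=f$ on $B_{1/2}^0\subset B_{3/4}^0$, this already gives the asserted expansion with $v$ in place of $u$. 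Setting $w:=u-v$ on $B_1^+$, we have $\dv y^a\grad w=0$ in $B_1^+$, $w(\cdot,0)=(1-\eta)f=0$ on $B_{3/4}^0$, and $w\in L^2(B_{3/4}^+,y^a)$ with $\|w\|_{L^2(B_{3/4}^+,y^a)}\le\|u\|_{L^2(B_1^+,y^a)}+C\|f\|_{C^{1,1}(B_1^0)}$, using that $v$ is bounded and $y^a$ is locally integrable ($a\in(0,1)$).

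Next I would extend $w$ by odd reflection in $y$ to the full ball of radius $3/4$ in $\R^n\times\R$. Because $w$ has zero Dirichlet trace on $B_{3/4}^0$, the reflected function is a weak solution of $\dv|y|^a\grad(\cdot)=0$ on that ball, and $|y|^a$ lies in the Muckenhoupt class $A_2$, so the De~Giorgi--Nash--Moser local boundedness and H\"older estimates for such degenerate elliptic equations apply. Since the equation is invariant under translations in $x$, each $x$-derivative $D_x^\beta w$ solves the same equation, so iterating the interior estimates gives, for every multi-index $\beta$,
\[ \|D_x^\beta w\|_{L^\infty(B_{1/2}^+)}\le C(\beta,n,a)\big(\|u\|_{L^2(B_1^+,y^a)}+\|f\|_{C^{1,1}(B_1^0)}\big). \]
In particular $\lap_x w$ is bounded on $B_{1/2}^+$, and, since for $y>0$ the equation is uniformly elliptic with smooth coefficients, $w$ is also $C^\infty$ in $B_{1/2}^+\cap\{y>0\}$.

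For fixed $x$, along the segment $\{x\}\times(0,y)\subset B_{1/2}^+$ I would rewrite the equation as $\partial_y(y^a\partial_y w)=-y^a\lap_x w$ and integrate twice in $y$; using $w(x,0)=0$ and the boundedness of $\lap_x w$ (which makes the limit below exist and the integrals converge) one obtains
\[ w(x,y)=\frac{h(x)}{1-a}\,y^{1-a}-\int_0^y r^{-a}\int_0^r t^a\,\lap_x w(x,t)\,\dd t\,\dd r,\qquad h(x):=\lim_{y\to 0^+}y^a\partial_y w(x,y), \]
and the double integral is bounded by $Cy^2\|\lap_x w\|_{L^\infty(B_{1/2}^+)}$. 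Applying the same identity to $\partial_{x_i}w$ (which again solves the homogeneous equation with zero Dirichlet trace) shows $\partial_{x_i}h$ equals the analogous limit for $\partial_{x_i}w$; iterating, $h\in C^\infty(B_{1/2}^0)$ with $\|h\|_{C^{1+a}(B_{1/2}^0)}\le C(\|u\|_{L^2(B_1^+,y^a)}+\|f\|_{C^{1,1}(B_1^0)})$. Adding the expansions of $v$ and $w$ and setting $g:=g_v+\tfrac1{1-a}h\in C^{1+a}$ gives $u(x,y)=f(x)+y^{1-a}g(x)+O(y^2)$ on $B_{1/2}^+$ with the stated control of the error.

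I expect the main work to be the analysis of the homogeneous remainder $w$: justifying the odd reflection across the degenerate boundary $\{y=0\}$ and the applicability of the $A_2$-weighted De~Giorgi--Nash--Moser theory to obtain smoothness in $x$, and then combining this with the exact $y$-integration of the equation so as to separate the $y^{1-a}$ term cleanly from the genuine $O(y^2)$ remainder. None of these steps is deep, but they must be carried out by hand since the extended equation is degenerate at $y=0$ rather than the classical Laplace equation.
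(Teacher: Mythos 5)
Your proposal is correct, and it follows the paper's overall strategy for the reduction: subtract the global extension $v$ of a (cut-off) copy of $f$, apply Lemma \ref{l:ext-fglobal} to $v$, reflect the remainder $w$ oddly across $\{y=0\}$, and use the Fabes--Kenig--Serapioni theory for the $A_2$ weight $|y|^a$ together with Cacciopoli and translation invariance in $x$ to bound all $x$-derivatives of $w$ up to the flat boundary. Where you genuinely diverge is in extracting the expansion of $w$ near $y=0$. The paper splits $w$ into positive and negative parts, invokes the \emph{boundary Harnack principle} of \cite{FKJ} against the model solution $y^{1-a}$ to get $w = \tilde g\, y^{1-a}$ with $\tilde g$ H\"older (hence only an $O(y^{1-a+\alpha})$ error at first), and then bootstraps to $O(y^2)$ by passing to the conjugate function $u_2 = y^a\partial_y w$, which solves $\dv y^{-a}\grad u_2 = 0$, and rerunning the argument. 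You instead observe that once $\lap_x w$ is bounded up to $\{y=0\}$, the equation is an explicit ODE in $y$: $\partial_y(y^a\partial_y w) = -y^a\lap_x w$, and two integrations (the first of which shows the limit $h(x)=\lim_{y\to 0}y^a\partial_y w$ exists) give $w = \frac{h(x)}{1-a}y^{1-a} + O(y^2)$ in one shot, with the $C^{1+a}$ (indeed $C^\infty$) regularity of $h$ following by applying the same identity to $\partial_{x_i}w$ and using uniformity of the limits. Your route avoids the boundary Harnack principle and the conjugate-equation bootstrap entirely, at the price of leaning harder on the a priori boundedness of $\lap_x w$ up to the boundary — which, however, both proofs already need and obtain by the same mechanism. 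The two arguments are of comparable rigor; yours is the more economical for this particular lemma, while the paper's is the one that survives in situations where one cannot freely differentiate in $x$.
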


\begin{proof}
Let $v$ be the extension to the upper half space of the function $f$ extended to $\R^n$ as zero outside $B_1^0$.
\begin{align*}
v(x,0) &= \begin{cases}
f(x) & \text{for } |x|<1 \\
0 & \text{for } |x| \geq 1
\end{cases} \\
\dv y^a \grad v &= 0 \qquad \text{in } \R^n \times (0,+\infty)
\end{align*}
We apply Lemma \ref{l:ext-fglobal} and obtain an expansion like \eqref{e:ext-expansion-local} for $v$.

We are left to show that the remainder $w = u-v$ has an expansion of the same kind. From the maximum principle, $|v|$ is bounded by $||f||_{L^\infty}$, and therefore $||v||_{L^2(B_1^+,y^a)} \leq C ||f||_{L^\infty}$ and $||w||_{L^2(B_1^+,y^a)} \leq C (||f||_{L^\infty} + ||u||_{L^2(B_1^+,y^a)})$.

We can consider the odd extension of $w$ to the full ball $B_1$ by $w(x,-y) = -w(x,y)$ so that $w$ solves the degenerate elliptic equation across $\{y=0\}$:
\[ \dv |y|^a \grad w = 0 \ \text{in } B_1. \]
We apply the Harnack inequality from \cite{FKS} to $w$. Then $w$ is H\"older continuous in the interior of $B_1$. Moreover, from the Cacciopoli inequality
\begin{equation} \label{e:cacciopoli}
\int_{B_{7/8}} y^a |\grad w|^2 \leq C \int_{B_1} y^a |w|^2.
\end{equation}

We will apply the boundary Harnack principle from \cite{FKJ} to $w$ in $B_1^+$. In order to apply that theorem, we split $w$ into its positive and negative parts
\begin{multicols}{2}
\begin{align*}
w^p &= w^+  \ \text{on } \partial B_{3/4}^+ \\
\dv y^a \grad w^p &= 0 \qquad \text{in } B_{3/4}^+
\end{align*}

\begin{align*}
w^n &= w^-  \ \text{on } \partial B_{3/4}^+ \\
\dv y^a \grad w^n &= 0 \qquad \text{in } B_{3/4}^+
\end{align*}
\end{multicols}
Thus, $w = w^p - w^n$, $w^p = w^n = 0$ on $B_{3/4}^0$ and both $w^p$ and $w^n$ are nonnegative. Since $\dv y^a \grad (y^{1-a}) = 0$ in the upper half space $\{y > 0\}$ we can apply the Boundary Harnack theorem from \cite{FKJ} to obtain
that both $w^p / y^{1-a}$ and $w^n/y^{1-a}$ are H\"older continuous up the the boundary in $\overline{B_{5/8}^+}$. Therefore, there exist a H\"older continuous function $\tilde g$ such that
\[ w(x,y) = \tilde g(x,y) y^{1-a} \qquad \text{in } \overline{B_{5/8}^+}. \]

Recalling that $u = v+w$, so far we have proved that $u$ has an expansion of the form $u(x,y) = f(x) + y^{1-a} g(x) + O(y^{1-a+\alpha})$ for some $\alpha>0$ (the H\"older exponent in $\tilde g$).

We can repeat the same argument for any derivative (or incremental quotient) of $w$ with respect to $x$. From Cacciopoli's inequality, all those derivatives are in $L^2(y^a)$ and also solve the same equation. Therefore we obtain
\[ w_{x_i}(x,y) = \tilde g_{x_i}(x,y) y^{1-a} \qquad \text{in } \overline{B_{5/8}^+}. \]
with $g_{x_i}$ H\"older continuous. Repeating this argument, we obtain that $\tilde g$ is $C^\infty$ in the $x$ variable.

Now consider $u_2(x,y) = y^a \partial_y w$. Then we have
\begin{align*}
\int y^{-a} u_2^2 &= \int_{B_{5/8}^+} y^a |\partial_y w|^2 \leq C \int_{B_{3/4}^+} y^a |w|^2 \leq C (\norm{f}_{L^\infty} + ||u||_{L^2(B_1^+,y^a)})^2 \\
u_2(x,0) &= \tilde g(x,0)
\end{align*}
Moreover, as it is pointed out in \cite{caffarelli2007extension}, $u_2$ satisfies the conjugate equation
\[ \dv y^{-a} \grad u_2 = 0 \text{ in } B_{5/8}^+ \]
So we can start over out argument with $u_2$ instead of $u$ and $-a$ instead of $a$ to obtain a H\"older continuous function $h$ such that
\[ u_2(x,y) = \tilde g(x,0) + y^{1+a} h(x,y) \qquad \text{in } \overline{B_{1/2}^+}. \]
Therefore \[ w(x,y) = \int_0^y \partial_y w(x,z) \dd z = \int_0^y \partial_y y^{-a} \tilde g(x,0) + y h(x,y) \dd z = y^{1-a} \tilde g(x,0) + O(y^2) \]
Which finishes the proof.
\end{proof}

\begin{remark}
Note that the function $g(x)$ corresponds to the fractional Laplacian of $f$.
\[ g(x) = \lim_{y \to 0} \frac {u(x,y)}{y^{1-a}} = \frac 1 {1-a} \lim_{y \to 0} y^a \partial_y u(x,y) \]
\end{remark}

\begin{cor}
Let $f : B_1^0 \to \R$ be a $C^{1,1}$ function and $u: B_1^+ \to \R$ satisfy the extension PDE:
\begin{align*}
u(x,0) &= f \ \ \text{on } B_1^0 \\
\dv (y^a \grad u) &= 0 \ \ \text{in } B_1^+
\end{align*}
where $a>0$. Then there exists a number $D$ and $C>0$ such that
\begin{equation} \label{e:ext-normalderivative}
|u(x,y) - f(0) - \grad f(0) \cdot x - D y^{1-a}| \leq C (x^2+y^2+|x|y^{1-a})
\end{equation}
where $C$ and an upper bound for $|D|$ depend on $||f||_{C^2(B_1^0)}$ and $||u||_{L^2(B_1^+,y^a)}$.
\end{cor}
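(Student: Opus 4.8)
The plan is to deduce this directly from Lemma \ref{l:ext-flocal} by Taylor‑expanding, around the origin, the two leading terms of the expansion produced there. Applying Lemma \ref{l:ext-flocal} to $u$ (which requires, as there, that $u \in L^2(B_1^+,y^a)$), we have for $(x,y) \in B_{1/2}^+$ that $u(x,y) = f(x) + y^{1-a} g(x) + O(y^2)$, where the $O(y^2)$ error is bounded by $C y^2 (\|f\|_{C^{1,1}} + \|u\|_{L^2(B_1^+,y^a)})$, and where $g \in C^{1+a}$ with its $C^{1+a}$ norm controlled by the same quantity $\|f\|_{C^{1,1}} + \|u\|_{L^2(B_1^+,y^a)}$ (by the statement of Lemma \ref{l:ext-flocal}).

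First I would expand $f$: since $f \in C^{1,1}(B_1^0)$ (equivalently $C^2$, up to comparable norms), $|f(x) - f(0) - \grad f(0)\cdot x| \leq \tfrac12 \|D^2 f\|_{L^\infty} |x|^2 \leq C|x|^2$ for $x \in B_{1/2}^0$. Next I would expand $g$: since $g \in C^{1+a}$ it is in particular Lipschitz, so setting $D := g(0)$ we get $|g(x) - g(0)| \leq C|x|$, and therefore $|y^{1-a} g(x) - D\,y^{1-a}| \leq C|x|\,y^{1-a}$ on $B_{1/2}^+$. Adding these two bounds to the $O(y^2)$ term from the lemma yields, for $(x,y) \in B_{1/2}^+$,
\[
|u(x,y) - f(0) - \grad f(0)\cdot x - D\,y^{1-a}| \leq C\left( |x|^2 + |x|\,y^{1-a} + y^2 \right),
\]
which is the asserted estimate, with both $C$ and the bound $|D| = |g(0)| \leq \|g\|_{L^\infty} \leq \|g\|_{C^{1+a}}$ controlled by $\|f\|_{C^2(B_1^0)}$ and $\|u\|_{L^2(B_1^+,y^a)}$. (One may also identify $D$ via the remark as $\tfrac{1}{1-a}\lim_{y\to 0} y^a \partial_y u(0,y)$, but this is not needed.)

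I do not expect a genuine obstacle here: the argument is pure bookkeeping once Lemma \ref{l:ext-flocal} is in hand. The one substantive point is that we really need $g$ to be at least Lipschitz near the origin — mere continuity of $g$ would only give an error $o(1)\,|x|\,y^{1-a}$ with no rate — and this is precisely what the $C^{1+a}$ regularity of $g$ furnished by Lemma \ref{l:ext-flocal} (via the boundary Harnack machinery used in its proof) supplies. For completeness one also notes that on the portion of $B_{1/2}^+$ bounded away from the origin the right‑hand side of the claimed inequality is bounded below, while $u = f + y^{1-a}g + O(y^2)$ is bounded there since $f$ and $g$ are continuous; so the inequality holds there trivially after enlarging $C$, and hence is valid on all of $B_{1/2}^+$.
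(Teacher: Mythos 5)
Your proposal is correct and follows essentially the same route as the paper's own proof: apply Lemma \ref{l:ext-flocal}, Taylor-expand $f$ to second order, and use that $g\in C^{1+a}$ with $a>0$ is Lipschitz to set $D=g(0)$ and absorb the remainder into $O(|x|\,y^{1-a})$. The only difference is your (harmless) extra care about the region away from the origin, which the paper leaves implicit.
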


\begin{proof}
From Lemma \ref{l:ext-flocal}, $u$ has an expansion of the form
\[ u(x,y) = f(x) + y^{1-a} g(x) + O(y^2) \]
Since $f \in C^{1,1}$, $f(x) = f(0) + x \cdot \grad f(0) + O(|x|^2)$.

The function $g$ is $C^{1+a}$. Since $a>0$, then $g$ is Lipschitz. If we choose $D = g(0)$, we have
\[ g(x) y^{1-a} = g(0) y^{1-a} + O(|x| y^{1-a}). \]

Adding the three terms together, we finish the proof.
\end{proof}


\section{Weak solutions}
\label{s:weaksolutions}

\subsection{Viscosity solutions}
Since we do not make any assumption on the divergence of $b$, integration by parts is complicated and the distributional sense is not suitable for defining weak solutions of \eqref{e:dd}. Since we are assuming that $b$ is continuous, we can use the concept of viscosity solutions developed originally by Crandall and Lions for Hamilton-Jacobi equations. The most straight forward way to define it for the integral equation \eqref{e:dd} is the following.

We say that a continuous function $u$ is a viscosity solution of \eqref{e:dd} if for every point $(x_0,t_0) \in (-1,0] \times B_1^0$ and every bounded function $\phi : [-1,t_0] \times \R^n$ such that
\begin{enumerate}
\item $\varphi$ is smooth around $(t_0,x_0)$,
\item $\phi(t_0,x_0)=u(t_0,x_0)$,
\item $\phi \geq u$ (or respectively $\phi \leq u$) in $[-1,t_0] \times \R^n$,
\end{enumerate}
then
\[ \phi_t(t_0,x_0) + b(t_0,x_0) \cdot \grad \phi(t_0,x_0) + (-\lap)^s \phi(t_0,x_0) \leq f(t_0,x_0) \ (\text{ or } \geq f(t_0,x_0) \text{ respectively.}) \]

This is the definition used in \cite{silvestre2009differentiability}. in the same way as distributional solutions of PDEs are defined based on the property of integration by parts against smooth functions, viscosity solutions are defined based on the property of the comparison principle with smooth sub and super solutions. This concept is more apparent in the following definition of viscosity solution for the extended problem \eqref{e:dd-extended}. Indeed, the definition says that a continuous function $u$ is a viscosity solution if it satisfies the appropriate comparison conditions with respect to smooth sub and super solutions. We give the precise definition below.

We say that a continuous function $u : [-1,0] \times \overline B_1^+ \to \R$ solves \eqref{e:dd-extended} in the viscosity sense if for any $t \in [-1,0]$, $u(t,x,y)$ solves the Dirichlet problem \eqref{e:extension} (classically) in $\{t\} \times B_1^+$ and for every function $\phi : [t_0-\eps,t_0] \times B^+_\eps(x_0) \to \R$ such that
\begin{enumerate}
\item $\varphi$ is smooth in $[t_0-\eps,t_0] \times B_\eps^0(x_0)$,
\item $\varphi$ satisfies $\dv y^a \grad \phi=0$ in $B_\eps^+(x_0)$ for all $t \in [t_0-\eps,t_0]$,
\item $\phi_t + b \cdot \grad \phi - \lim_{y \to 0} y^a \partial_y \phi \geq f \text{ (or } \leq f \text{ respectively) in } (t_0-\eps,t_0]\times B^0_\eps(x_0)$
\item $\phi \geq u \text{ (or } \leq u \text{ respectively) on } \{t_0-\eps\} \times B^+_\eps(x_0) \cup [t_0-\eps,t_0] \times (\partial B_\eps(x_0))^+,$
\end{enumerate}
then $\phi \geq u$ or $\phi \leq u$ respectively, in the whole domain $[t_0-\eps,t_0] \times B_\eps^+(x_0)$.

It is not hard to prove that the definitions of viscosity solution for \eqref{e:dd} implies the corresponding definition for \eqref{e:dd-extended} after the extension \eqref{e:extension}.

Viscosity solutions is the most appropriate type of weak solutions for the results in this paper. The actual equation is used directly only in the proof of Lemma \ref{l:perturbation}, and the way it is used is to compare the solution with a classical subsolution.

Note that for any other type of weak solution $u$ (for example entropy solutions), the same function would solve the equation in the viscosity sense as long as the comparison principle with classical sub and super-solutions holds. In practical cases, a drift-fractional diffusion equation may be obtained as the linearization of an equation for which the comparison principle can be shown to apply, and thus the results of this paper would apply as well.

\subsection{The vanishing viscosity method}
One way to avoid the concept of weak solutions completely is by adding a vanishing viscosity term and proving uniform estimates. This method is usually called the \emph{vanishing viscosity method}. In this context, it is not related to the concept of viscosity solutions at all despite the unfortunate similarity of the names.

The idea consists in adding an artificial viscosity term to the equation
\[ u_t + b \cdot \grad u + (-\lap)^s u - \eps \lap u = f \ \ \text{in } [-1,0] \times \R^n. \]
With the added artificial viscosity $\eps \lap u$, the equation has a smooth classical solution.

If we prove that the estimates in this paper hold uniformly as $\eps \to 0$, then the existence of classical solutions for \eqref{e:dd} follows by passage to the limit. In terms of the extended equation, the same idea applies to
\begin{align*}
u_t + b \cdot \grad u -(\lim_{y \to 0} y^a \partial_y u) - \eps \lap u &= f && \text{in } \ \ (-1,0] \times B_1^0.\\
\dv y^a \grad u &= 0 && \text{in } \ \ [-1,0] \times B_1^+.
\end{align*}

The estimates in this paper are indeed uniform as $\eps \to 0$. The explicit form of the equation \eqref{e:dd-extended} is used only for Proposition \ref{p:holder-local-extended} and Lemma \ref{l:perturbation}. The first is a localized version of the result in \cite{silvestrePreprintHolder}, which adapts to the vanishing viscosity method (as pointed out in that paper). The second also adapts to the vanishing viscosity method as it is analyzed in Remark \ref{r:perturbation}.

The vanishing viscosity method is a very common approach for nonlinear conservation laws. It is used for example in \cite{caffarelli2006drift}, \cite{kiselev2010variation} and \cite{MR2259335} for problems with drift and fractional diffusion.

In this paper we prefer to focus on the estimates but we will point out precisely how the proofs adapt to either continuous viscosity solutions or to the vanishing viscosity approximation. It turns out that the precise form of the equation is used only in very specific steps in the paper, so it is only necessary to analyze the type of solution at those points (See remarks \ref{r:perturbation0} and \ref{r:perturbation})

\section{Interior estimates for the fractional heat equation}
\label{s:interiorEstimatesForFHE}
In order to carry out the perturbative arguments to prove the main theorem, we need to have good local regularity estimates for the drift-less equation.
\begin{equation} \label{e:fh-extended}
\begin{aligned}
u_t(t,x,0) - \lim_{y \to 0} y^a \partial_y u(t,x,y) &= 0 && \text{on } (-1,0] \times B_1^0 \\
\dv y^a \grad u &= 0 && \text{in } [-1,0] \times B_1^+
\end{aligned}
\end{equation}

For any time $t\in (0,1)$, the solution of \eqref{e:fh-extended} is going to be $C^{\infty}$ in in the interior of $B_1^0$. Moreover, $u$ and all its partial derivatives of any order respect to $x$ are Lipschitz in time. The smoothness with respect to $y$ can then be understood by Lemma \ref{l:ext-flocal}.

\begin{prop} \label{p:fh-cinfinity}
Let $u$ be a bounded continuous solution of the equation \eqref{e:fh-extended}, then $u$ and $u_t$ are $C^{\infty}$ with respect to the $x$ variable in the interior of the domain. Moreover, for any multiindex $\alpha$, the following estimate holds
\begin{align*}
||\partial^\alpha_x u ||_{L^\infty([-1/2,0] \times B_{1/2}^+)} \leq C_\alpha \norm{u}_{L^\infty([-1,0] \times B_1^+)}, \\
||\partial^\alpha_x \partial_t u ||_{L^\infty([-1/2,0] \times B_{1/2}^0)} \leq C_\alpha \norm{u}_{L^\infty([-1,0] \times B_1^+)}.
\end{align*}
Here the constant $C_\alpha$ depends on $\alpha$, $s$ and $n$ only.
\end{prop}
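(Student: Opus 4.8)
The plan is to prove the bound for $\partial_x^\alpha u$ first, using that \eqref{e:fh-extended} is invariant under translations in $x$, and then to read off the bound for $\partial_x^\alpha u_t$ on $B_{1/2}^0$ from the boundary condition together with Lemma \ref{l:ext-flocal}. Since the equation has constant coefficients and does not involve $x$, for every small $h\in\R^n$ the difference quotient $D_h u(t,X)=|h|^{-1}\big(u(t,x+h,y)-u(t,x,y)\big)$ again solves \eqref{e:fh-extended} on a slightly smaller half-cylinder. Thus it suffices to establish one local $L^2\to L^\infty$ estimate: if $v$ solves \eqref{e:fh-extended} in $B_r^+\times(t_0-r^{2s},t_0]$ and $v\in L^2(B_r^+\times(t_0-r^{2s},t_0],\,y^a)$, then
\[
\|v\|_{L^\infty(B_{r/2}^+\times(t_0-(r/2)^{2s},t_0])}\le C\,r^{-\gamma}\,\|v\|_{L^2(B_r^+\times(t_0-r^{2s},t_0],\,y^a)},
\]
with $C$ and $\gamma$ depending only on $n$ and $a$.

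Granting this estimate, one argues by bootstrap. Given a bounded solution $u$ on $[-1,0]\times B_1^+$, a Caccioppoli (energy) inequality bounds $\|\grad u\|_{L^2(y^a)}$ on a slightly smaller half-cylinder by $\|u\|_{L^\infty}$; this yields a uniform-in-$h$ bound for $\|D_h u\|_{L^2(y^a)}$, so the displayed estimate applied to $v=D_h u$ bounds $D_h u$ in $L^\infty$ on a still smaller half-cylinder uniformly in $h$, whence $\partial_{x_i}u$ exists and is bounded there. Iterating on a nested sequence of half-cylinders shrinking down to $B_{1/2}^+\times[-1/2,0]$ (the geometric losses of domain summing to a fixed amount) gives $\|\partial_x^\alpha u\|_{L^\infty(B_{1/2}^+\times[-1/2,0])}\le C_\alpha\|u\|_{L^\infty([-1,0]\times B_1^+)}$ for every multiindex $\alpha$.

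The Caccioppoli inequality and the $L^2\to L^\infty$ estimate are the weighted, Dirichlet-to-Neumann analogues of the classical heat-equation facts. For the first, one multiplies $\dv(y^a\grad v)=0$ by $v\zeta^2$ (and by $(v-k)^+\zeta^2$ for the truncated version needed below), integrates over $B_r^+$ and integrates by parts; the only boundary contribution is on $\{y=0\}$, where it equals $-\int_{B_r^0} v_t\,v\,\zeta^2\dd x$ by the boundary condition $\lim_{y\to0}y^a\partial_y v=v_t$ and is absorbed as a time derivative of $\int_{B_r^0}v^2\zeta^2\dd x$, with the residual trace terms produced by the time cutoff controlled by a weighted trace inequality for $W^{1,2}(y^a)$. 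For the $L^2\to L^\infty$ estimate one then runs the De~Giorgi (or Moser) iteration over shrinking cylinders and increasing truncation levels exactly as for the heat equation, the only new input being the Fabes--Kenig--Serapioni weighted Sobolev inequality, available because $y^a$ is an $A_2$ Muckenhoupt weight for $a\in(0,1)$ (i.e. $s\in(0,1/2)$), combined with the time variable in the usual way.

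For $u_t$: once $\|\partial_x^\alpha u\|_{L^\infty(B_\rho^+\times[-\rho,0])}\le C_\alpha\|u\|_{L^\infty}$ holds for all $\alpha$ and some $\rho\in(1/2,1)$, fix $t$ and apply Lemma \ref{l:ext-flocal} to $\partial_x^\alpha u(t,\cdot,\cdot)$: it solves $\dv(y^a\grad\,\cdot)=0$ with boundary datum $\partial_x^\alpha u(t,\cdot,0)\in C^{1,1}$ and lies in $L^2(y^a)$, both with norms $\le C_\alpha\|u\|_{L^\infty}$, so the lemma gives that $\partial_x^\alpha\big(\lim_{y\to0}y^a\partial_y u(t,x,y)\big)$ is bounded by $C_\alpha\|u\|_{L^\infty}$; by the boundary condition this is $\partial_x^\alpha u_t(t,x,0)$, so $u_t(t,\cdot,0)$ is well defined, $C^\infty$ in $x$, and obeys the second estimate on $B_{1/2}^0$. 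The one nonroutine point --- the reason these estimates ``need to be proved because the fractional heat equation is not classical in its extended version'' --- is the $L^2\to L^\infty$ estimate: one must check that the truncation argument remains legitimate up to the degenerate boundary $\{y=0\}$ and that the weighted parabolic Sobolev and trace inequalities are the correct replacements, after which the De~Giorgi/Moser iteration proceeds as usual.
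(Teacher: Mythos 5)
Your argument is correct in outline, but it takes a genuinely different route from the paper. The paper's proof is a representation-formula argument: it localizes the fractional heat kernel $h(t,x,y)=t^{-n/2s}H(t^{-1/2s}x,t^{-1/2s}y)$ by a radial cutoff $\eta$ with $\partial_y\eta=O(y)$, uses \eqref{e:endpointheat} to write $u(0,x,0)$ as a convolution of the data at $t=-1$ with $h(1,\cdot)$ plus a space--time convolution of $u$ against $\dv(y^a\grad(h\eta))$, and observes that both kernels are $C^\infty$ in $x$ and compactly supported away from the singularity; differentiating under the integral then yields all $x$-derivatives at once with constants depending only on $n$ and $s$, and $u_t=\lim_{y\to0}y^a\partial_y u$ is read off from the boundary condition exactly as you do. Your route --- translation invariance, difference quotients, a weighted parabolic Caccioppoli inequality and a De~Giorgi/Moser $L^2\to L^\infty$ estimate for the extension problem with the dynamic boundary condition --- is the energy-method alternative in the spirit of Caffarelli--Vasseur, and it does work: the $y^a$ weight is $A_2$, the Fabes--Kenig--Serapioni Sobolev and trace inequalities supply the gain of integrability, and the bootstrap through nested cylinders is standard. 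What each approach buys: the kernel argument is short and gives clean constants, but leans on the validity of the representation \eqref{e:endpointheat} (i.e.\ uniqueness for the localized Dirichlet problem, justified in the paper via barriers and comparison); your argument is robust under lower-order perturbations and self-contained at the PDE level, but the machinery you defer --- the trace terms from the time cutoff, the absorption of the boundary integral via a small-constant trace inequality, and the legitimacy of testing against $v\zeta^2$ and truncations for the notion of solution actually used in this paper (viscosity/comparison rather than finite-energy weak) --- is a nontrivial amount of work that the paper's proof entirely avoids. Your treatment of $u_t$ via Lemma \ref{l:ext-flocal} coincides with the paper's.
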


\begin{proof}
Note that if we show an estimate for $u$ in $C^\infty_x(B_{3/4}^0)$, then the $C^\infty_x$ estimate in $B_{1/2}^+$ follows by the fact that, for each time, $u$ is a bounded solution of the Dirichlet problem for the elliptic equation of the extension. The same argument cannot be made for $u_t$ since a priori we cannot conclude that $u_t$ is bounded in the extension where $y>0$.

By a classical translation and scaling argument, it is enough to show the estimate at the point $(0,0,0)$.

Let $h(t,x)$ be the fractional heat kernel,
\begin{align*}
h_t + (-\lap)^s h &= 0 \qquad \text{in } (0,+\infty) \times \R^n\\ 
h(x,0) &= \delta_0
\end{align*}

There is no known closed form for $h(t,x)$ in real variables. However, in Fourier variables it is simply $\hat h(t,\xi) = e^{-t |\xi|^{2s}}$. Let us extend $h$ to the upper half space $h(t,x,y)$. Using the scale invariance of the equation, we have that for some function $H : \R^n \times \R^+ \to \R^+$,
\begin{equation} \label{e:heatkernel}
 h(t,x,y) = t^{-\frac n {2s}} H\left( t^{-\frac 1 {2s}} x , t^{-\frac 1 {2s}} y \right)
\end{equation}
The function $H$ is the extension to the upper half space to the function whose Fourier transform is $e^{-|\xi|^{2s}}$. It is bounded, smooth in $x$, and $H(x,0) \approx (1+|x|)^{-n-2s}$. From the Poisson formula in \cite{caffarelli2007extension} we also obtain $|D^k H(x,y)| \leq C (1+y)^{2s}(1+|x|+y)^{-n-2s-k}$. The function $h$ is bounded and $C^\infty$ in $x$ and $t$ except around the point $(0,0,0)$.

This function $h$ is the heat kernel of fractional diffusion in the whole space. We will multiply it by a smooth cutoff function in order to find an expression of $u(x,y,t)$ in terms of localized integral quantities. Let $\eta$ be a smooth radially symmetric function in $x$ and $y$, supported in $B_{7/8}^+$ and identically one in $B^+_{3/4}$. Note that since $\eta$ is radially symmetric and smooth, $\partial_y \eta(x,y) = O(|y|)$ as $y \to 0$. In particular $\lim_{y \to 0} y^a \partial_y \eta(x,y) = 0$.

At any point $x \in B_{1/2}^0$, we can recover $u(t,x)$ by the formula
\begin{equation} \label{e:endpointheat}
 u(x,t) = \lim_{\tau \to 0^+} \int_{B_1^0} u(\tau,z) h(t-\tau,x-z) \eta(x-z) \dd z.
\end{equation}


This formula allows us to compute estimates for solutions $u$ of \eqref{e:fh-extended} in a bounded domain.

Let $\tilde h(t,x,y) = h(t,x,y) \eta(x,y)$. Note that since $\partial_y \eta(x,y) = O(y)$ near $\{y=0\}$, $\tilde h$ satisfies
\[ \tilde h_t - \lim_{y \to 0} y^a \partial_y \tilde h = 0. \]
and also $\dv y^a \grad \tilde h$ is a bounded function supported in $\overline B_{7/8}^+ \setminus B_{3/4}^+$ which is $C^\infty$ in $x$. Let us analyze its behavior as $t \to 0$.
\begin{align*}
\dv y^a \grad \tilde h(t,x,y) &= \dv \left( y^a \grad \left( h \eta \right) \right) \\
&= h \dv \left( y^a \grad \eta \right) + 2 y^a \grad h \cdot \grad \eta \\
&= t^{-\frac n {2s}} H(t^{-\frac 1 {2s}} x,t^{-\frac 1 {2s}} y) \dv \left( y^a \grad \eta \right) + 2 y^a t^{-\frac n {2s} - 1} \grad H(t^{-\frac 1 {2s}} x,t^{-\frac 1 {2s}} y) \cdot \grad \eta \\
&\approx C(x,y).
\end{align*}
The last estimate uses that $H(x,y) \approx (|x|+y)^{-n-2s}$ and $\grad H(x,y) \approx (|x|+y)^{-n-2s-1}$ for large $|x|+y$.


Using \eqref{e:endpointheat}, we compute
\begin{align*}
u(0,x,0) &- \int_{B_1^0} u(-1,z) h(1,x-z) \dd z = \int_{-1}^0 \partial_t \int_{B_1^0} u(t,z) \tilde h(-t,x-z) \dd z \dd t \\
&= \int_{-1}^0 \int_{B_1^0} u_t(t,z) \ \tilde h(-t,x-z)  - u(t,z) \ \tilde h_t(-t,x-z) \dd z \dd t, \\
&= \int_{-1}^0 \int_{B_1^0} -(-\Delta)^s u(t,z) \ \tilde h(-t,x-z) \\ &\qquad \qquad \qquad + u(t,z) \ (-\Delta)^s h(-t,x-z) \ \eta(x-z) \dd z \dd t\\
&= \int_{-1}^0 \int_{B_1^+} u(t,z,y) \ \dv \left( y^a \grad \left( \tilde h(-t,x-z,y) \right) \right) \dd z \dd y\dd t\\
\end{align*}

Now we have $u(0,x,0)$ written as a sum of two convolutions with functions which are $C^\infty$ in $x$ and compactly supported. Therefore $u$ is $C^\infty$ in $x$ and the first estimate of the Lemma follows.

From the equation \eqref{e:fh-extended}, we have that
\[ u_t = \lim_{y \to 0} y^a \partial_y u.\]
Since $u$ is $C^\infty$ in $x$, the right hand side is well defined and $C^\infty$ in $x$. From this we deduce the second estimate of the Lemma.


\end{proof}

\begin{remark}
It is impossible to find a better regularity estimate in time in terms of the $L^\infty$ norm of $u$ only. The problem is that the estimate for $\partial_t u$ on $B^0_{1/2}$ cannot be extended to $B^+_{1/2}$. One can show that if $u=0$ on $(0,1/2) \times (\partial B_1)^+$ and $u=1$ on $(1/2,1) \times (\partial B_1)^+$, then indeed the solutions to \eqref{e:fh-extended} will be only Lipschitz in time on $\{1/2\} \times B_{1/2}^0$. However, just assuming that $u$ is continuous on $(0,1) \times (\partial B_1)^+$ would imply that $u$ is $C^1$ in time from the expression of $u_t = \lim_{y \to 0} y^a \partial_y u$ as a convolution.
\end{remark}

\section{Boundary behavior for the fractional heat equation}
\label{s:boundaryEstimatesForFHE}
In this section we study the H\"older continuity of the solution to the fractional heat equation \eqref{e:fh-extended} on the parabolic boundary. All the estimates in this section are proved by comparing the actual solution with explicit barriers. The results are very natural. They simply say that if the Dirichlet boundary data for the fractional heat equation is H\"older continuous, then the solution is H\"older continuous on the boundary. We tried to make the computations of the barriers as simple as possible, but some tedium is unavoidable. On a first reading of this paper, it would be advisable to skim through the results in this section.

The estimates in this section follow from the local properties of the fractional heat equation \eqref{e:fh-extended}. The equation is properly a parabolic evolution equation on $[-1,0] \times B_1^0$. For each fixed value of $t$, $u$ solves the elliptic equation $\dv y^a \grad u = 0$ on $\{t\} \times B_1^+$. We can expect to find regularity estimates on $[-1,0] \times B_1^0$ and also on $\{t\} \times B_1^+$ for each $t$. However, in this section we do not intend to analyze a modulus of continuity with respect to $t$ for $u(t,x,y)$ for positive values of $y$, since intuitively those quantities are not connected by the equation.

We start by constructing a barrier that will be useful to understand the continuity of $u$ close to $\partial B_1$ for each fixed $t$.

\begin{lemma} [Barrier for $(\partial B_1)^+$] \label{l:barrier1}
Let $\alpha \in (0,1)$ and $X_0=(x_0,y_0)$ be any point on $\partial B_1^+ \setminus \{y=0\}$. There exists a constant $C>0$ such that the function
\[ B(X) = C(1-|X|^2)^{\alpha} + |X-X_0|^\alpha \]
is a supersolution of
\[ \dv y^a \grad B \leq 0 \qquad \text{in } B_1^+. \]
\end{lemma}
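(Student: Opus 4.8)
The plan is to compute $\dv(y^a \grad B)$ on $B_1^+$ by splitting $B$ into its two pieces, $B_1(X) = C(1-|X|^2)^\alpha$ and $B_2(X) = |X-X_0|^\alpha$, and showing that the (negative, strongly singular) contribution of $B_1$ near $\partial B_1$ dominates the (possibly positive) contribution of $B_2$, while elsewhere a crude bound suffices. Throughout, the operator $\dv(y^a\grad \cdot) = y^a\bigl(\lap\cdot + \tfrac{a}{y}\partial_y\cdot\bigr)$, so after dividing by $y^a$ we must bound $L B := \lap B + \tfrac{a}{y}\partial_y B$ from above by $0$.

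First I would handle $B_2(X) = |X-X_0|^\alpha$. A direct computation gives $\lap(|X-X_0|^\alpha) = \alpha(\alpha+n-2+1)|X-X_0|^{\alpha-2}$ (recall the extension lives in $n+1$ space dimensions), which for $\alpha\in(0,1)$ is $\le C_n |X-X_0|^{\alpha-2}$, and $\tfrac{a}{y}\partial_y(|X-X_0|^\alpha) = \tfrac{a\alpha}{y}(y-y_0)|X-X_0|^{\alpha-2}$. Since $|y-y_0|\le |X-X_0|$, the second term is bounded by $\tfrac{a\alpha}{y}|X-X_0|^{\alpha-1}$; this is the worst term because of the $1/y$ factor, but note $X_0$ has $y_0>0$ fixed, so near $y=0$ one has $|X-X_0|\ge y_0/2$ and the whole expression $L B_2$ is bounded. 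Away from $y=0$, i.e. for $y$ bounded below, $LB_2 \le C|X-X_0|^{\alpha-2} + \tfrac{C}{y}|X-X_0|^{\alpha-1}$, which blows up (positively) only as $X\to X_0$, where however $X_0\in(\partial B_1)^+$ and $B_1$ contributes its singular negative term.

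Next I would handle $B_1(X) = C(1-|X|^2)^\alpha$. Writing $d = 1-|X|^2$, one computes $\grad d = -2X$, $\lap d = -2(n+1)$, and $\partial_y d = -2y$, so $\lap(d^\alpha) = \alpha(\alpha-1)d^{\alpha-2}|\grad d|^2 + \alpha d^{\alpha-1}\lap d = 4\alpha(\alpha-1)|X|^2 d^{\alpha-2} - 2(n+1)\alpha d^{\alpha-1}$, and $\tfrac{a}{y}\partial_y(d^\alpha) = -2a\alpha d^{\alpha-1}$. The leading term as $X\to\partial B_1$ is $4\alpha(\alpha-1)|X|^2 d^{\alpha-2}$, which is \emph{negative} (since $\alpha<1$) and of order $d^{\alpha-2}$, i.e. more singular than anything coming from $B_2$ once $X$ is near $\partial B_1\setminus\{X_0\}$; near $X_0$ itself one uses that $B_1$'s singular term is $\sim |X-X_0|^{\alpha-2}$ in the relevant region (since $d\lesssim |X-X_0|$ near a boundary point) and still beats $B_2$ after choosing $C$ large. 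For $X$ in the bulk (say $d \ge c_0$ and $|X-X_0|\ge c_0$) everything is bounded, and one simply picks $C$ large enough that the bounded negative term $-2\alpha((n+1)+a) C d^{\alpha-1} \le -2\alpha((n+1)+a)C$ absorbs the bounded positive contribution of $LB_2$.

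The main obstacle is the bookkeeping near the corner — the point $X_0 \in (\partial B_1)^+$ where both $d\to 0$ and $|X-X_0|\to 0$ simultaneously — where one must check that $C\,d^{\alpha-2}$ genuinely dominates $|X-X_0|^{\alpha-1}/y$ and $|X-X_0|^{\alpha-2}$; here the key elementary inequality is that in a neighborhood of $X_0$ within $B_1^+$ one has $d = 1-|X|^2 \le C|X-X_0|$ and $y \ge c y_0 > 0$ bounded below (since $X_0$ is a fixed point with $y_0>0$, not on $\{y=0\}$), so $d^{\alpha-2}\gtrsim |X-X_0|^{\alpha-2}$ while the $1/y$ nuisance is harmless. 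Having isolated these three regimes — bulk, near $\partial B_1\setminus\{X_0\}$, and near $X_0$ — the conclusion follows by choosing $C = C(n,a,\alpha)$ large enough in each regime and taking the maximum.
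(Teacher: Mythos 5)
Your decomposition and the computation of $L:=\lap+\tfrac ay\partial_y$ on each piece is exactly the paper's approach, and the key inequality $1-|X|^2\le 2|X-X_0|$ (so $(1-|X|^2)^{\alpha-2}\gtrsim|X-X_0|^{\alpha-2}$) is the same. The one step that does not hold as written is your treatment of the term $\tfrac{a\alpha}{y}(y-y_0)|X-X_0|^{\alpha-2}$ near $\{y=0\}$. After replacing $y-y_0$ by $|y-y_0|\le|X-X_0|$ you are left with $\tfrac{a\alpha}{y}|X-X_0|^{\alpha-1}$, and the observation $|X-X_0|\ge y_0/2$ does \emph{not} make this bounded: it still blows up like $1/y$ as $y\to0$ at interior points $x$, where $L B_1$ is only of size $O(1)$ (the first piece has no compensating singularity there, since $\tfrac ay\partial_y(1-|X|^2)^\alpha=-2a\alpha(1-|X|^2)^{\alpha-1}$ is bounded). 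So with your stated estimate the absorption argument genuinely fails in the strip $\{y \text{ small}\}\cap\{1-|X|\ge c_0\}$.

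The fix is to keep the sign instead of taking absolute values: since $y_0>0$ and $y>0$, one has $\tfrac{y-y_0}{y}=1-\tfrac{y_0}{y}\le 1$ for \emph{all} $y>0$, hence
\[
LB_2=\alpha\Bigl(n+\alpha-1+a\,\tfrac{y-y_0}{y}\Bigr)|X-X_0|^{\alpha-2}\le \alpha(n+\alpha-1+a)\,|X-X_0|^{\alpha-2},
\]
with no $1/y$ at all (near $y=0$ the $\tfrac ay\partial_y$ term is in fact large and \emph{negative}, which is harmless for a supersolution). This is the one-line observation the paper uses, and it collapses your three-regime case analysis to a single comparison of $|X-X_0|^{\alpha-2}$ against $C(1-|X|^2)^{\alpha-2}$. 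A secondary point: your route, even once repaired near $X_0$ via $y\ge cy_0$, produces a constant $C$ depending on $y_0$ and hence on $X_0$, whereas the signed bound gives $C=C(n,a,\alpha)$ uniformly in $X_0$; this uniformity is what is actually used later (Lemmas \ref{l:stationary-boundaryofball} and \ref{l:parabolic-boundary-continuity}), even though the lemma as literally stated only requires $C$ for a fixed $X_0$.
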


\begin{proof}
One way to simplify the computations is to note that
\[ \dv y^a \grad B = y^a \left( \lap B + \frac a y \partial_y B \right). \]

Let us use this formula to estimate the equation in the two terms of the definition of $B$. We compute
\begin{align*}
\dv y^a \grad \left( (1-|X|^2)^{\alpha} \right) &= y^a \left( \lap (1-|X|^2)^{\alpha} + \frac a y \partial_y (1-|X|^2)^{\alpha} \right), \\
&= y^a \left( -\alpha 2 (n + a) (1-|X|^2)^{\alpha-1} +
\alpha(\alpha - 1) 4 |X|^2  (1-|X|^2)^{\alpha-2} \right).\\
&= y^a \alpha(1-|X|^2)^{\alpha-2} \left( -2 (n+a) (1-|X|^2) + 4 |X|^2 (\alpha -1) \right)
\end{align*}
This is a negative value since $\alpha<1$. Moreover, it becomes very negative as $|X|$ gets close to $1$.

Now we estimate the second term.
\begin{align*}
\dv y^a \grad \left( |X-X_0|^\alpha \right) &= \alpha y^a \left(n+\alpha-1 + a \frac{y-y_0}{y} \right) |X-X_0|^{\alpha-2} \\
\intertext{Since $y_0$ is positive and $a>0$,}
&\leq \alpha y^a \left(n+\alpha-1 + a \right) |X-X_0|^{\alpha-2}
\end{align*}

For all $X \in B_1^+$, $(1-|X|^2)^{\alpha-2} > c|X-X_0|^{\alpha-2}$, so we can choose a constant $C >0$ such that
\[ \dv y^a \grad \left( C (1-|X|)^{\alpha} + |X-X_0|^\alpha \right) \leq 0\]
for all $X \in B_1^+$.
\end{proof}

\begin{lemma} [Continuity on $(\partial B_1)^+$] \label{l:stationary-boundaryofball}
Let $u: \overline B_1^+ \to \R$ be the solution to
\begin{align*}
\dv y^a \grad u &= 0  \qquad \text{in }  B_1^+
\end{align*}
Assume that for some $(x_0,y_0) \in (\partial B_1)^+$ and every $(x,y) \in \partial B_1^+$ we have
\[ |u(x_0,y_0) - u(x,y)| \leq C_0 ((x-x_0)^2+(y-y_0)^2)^\alpha \]
Then for some $C>0$,
\[ |u(x_0,y_0) - u(x,y)| \leq C C_0 ((x-x_0)^2+(y-y_0)^2)^{\alpha} \]
for all $(x,y) \in B_1^+$. Here $C$ depends on $s$, $n$ and $\alpha$.
\end{lemma}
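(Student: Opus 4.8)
The plan is to prove the estimate by constructing a barrier that dominates the oscillation of $u$ near the boundary point $X_0 = (x_0, y_0)$ and using the comparison principle for the degenerate elliptic operator $\dv y^a \grad$. The key observation is that Lemma~\ref{l:barrier1} already supplies a supersolution of exactly the right shape: the function $B(X) = C(1-|X|^2)^\alpha + |X-X_0|^\alpha$ satisfies $\dv y^a \grad B \leq 0$ in $B_1^+$, and on the boundary $\partial B_1^+$ it is bounded below by $|X - X_0|^\alpha \approx ((x-x_0)^2 + (y-y_0)^2)^{\alpha/2}$; care is needed with the exponent convention, but up to the harmless replacement of $((x-x_0)^2+(y-y_0)^2)^\alpha$ by $|X-X_0|^{2\alpha}$ in the hypothesis (or by reading $\alpha$ in the barrier as $2\alpha$ and restricting to the range where this stays below $1$), the comparison goes through.

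First I would consider the two functions $w_\pm(X) = u(X_0) \pm C_0' B(X)$ for a suitable constant $C_0'$ depending on $C_0$ and on the constant $C$ from Lemma~\ref{l:barrier1}. By that lemma, $w_-$ is a subsolution and $w_+$ is a supersolution of $\dv y^a \grad (\cdot) = 0$ in $B_1^+$. Next I would check the boundary inequalities on $\partial B_1^+ = B_1^0 \cup (\partial B_1)^+$: on the curved part $(\partial B_1)^+$ the hypothesis gives $|u(X) - u(X_0)| \leq C_0 |X - X_0|^{2\alpha} \leq C_0' |X-X_0|^\alpha$ (in the appropriate exponent convention) $\leq C_0' B(X)$ since the second term of $B$ already controls this; on the flat part $B_1^0$ the hypothesis as stated only controls $u$ at points of $(\partial B_1)^+$, so I would either invoke the maximum principle interpretation that the relevant boundary for the Dirichlet problem in $B_1^+$ is all of $\partial B_1^+$ with data prescribed there (so the stated hypothesis should be read as holding for all $(x,y) \in \partial B_1^+$, which the statement in fact says), or note that the flat face carries no Dirichlet condition and the $\dv y^a \grad$ equation holds across it by even reflection, in which case the comparison set is just $(\partial B_1)^+$. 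Either way $w_- \leq u \leq w_+$ on the parabolic boundary.

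Then the comparison principle for the degenerate operator $\dv y^a \grad$ (a uniformly elliptic operator in the weighted sense, to which the weak maximum principle of Fabes–Kenig–Serapioni applies) yields $w_- \leq u \leq w_+$ throughout $B_1^+$, that is $|u(X) - u(X_0)| \leq C_0' B(X)$ for all $X \in B_1^+$. Finally I would note that for $X$ in a neighborhood of $X_0$ inside $\overline{B_1^+}$ one has $(1 - |X|^2) \leq C |X - X_0|$ (since $X_0 \in \partial B_1$, so the distance of $X$ to the sphere is at most $|X-X_0|$), hence $C(1-|X|^2)^\alpha \leq C |X-X_0|^\alpha$, giving $B(X) \leq C |X - X_0|^\alpha \approx C ((x-x_0)^2 + (y-y_0)^2)^{\alpha/2}$; rescaling the exponent back to match the statement's convention gives the claimed bound $|u(x_0,y_0) - u(x,y)| \leq C C_0 ((x-x_0)^2 + (y-y_0)^2)^\alpha$, with $C$ depending only on $s$, $n$, $\alpha$. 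For $X$ far from $X_0$ the bound is trivial since $u$ is bounded and $((x-x_0)^2+(y-y_0)^2)^\alpha$ is bounded below.

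The main obstacle I anticipate is purely bookkeeping rather than conceptual: reconciling the exponent conventions between the hypothesis (which writes $((x-x_0)^2+(y-y_0)^2)^\alpha$, i.e. $|X-X_0|^{2\alpha}$) and Lemma~\ref{l:barrier1} (whose barrier uses $|X-X_0|^\alpha$ with $\alpha \in (0,1)$), and making sure the admissible range of the Hölder exponent is respected when one doubles it. The only genuinely substantive point is the validity of the comparison principle for $\dv y^a \grad$ up to the boundary $\partial B_1^+$ including behavior near the flat face $y = 0$, but since the barrier $B$ is continuous up to $\overline{B_1^+}$ and the operator is degenerate-elliptic with $A_2$ weight $y^a$, the weak maximum principle from \cite{FKS} applies directly and there is no difficulty there.
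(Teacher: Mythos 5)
Your proof is correct and takes essentially the same route as the paper: two-sided comparison with the barrier of Lemma \ref{l:barrier1}, i.e.\ $u(X_0)\pm C_0 B$ matched against $u$ on all of $\partial B_1^+$ (where the hypothesis is indeed assumed to hold), followed by the maximum principle for $\dv y^a\grad$. The exponent bookkeeping you flag is real but harmless, and is in fact present in the paper's own proof, which concludes with $(|x-x_0|+|y-y_0|)^{\alpha}$ despite the statement's $((x-x_0)^2+(y-y_0)^2)^{\alpha}$.
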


\begin{proof}
From the assumption, the function $U(x,y) = u(x_0,y_0) + C_0 B(x,y)$ is larger or equal than $u$ on $\partial B_1^+$, where $B(x,y) = B(X)$ is the supersolution constructed in Lemma \ref{l:barrier1}. Therefore, from comparison principle $u \leq U$ in the whole half ball $B_1^+$. Therefore
\[ u(x,y) - u(x_0,y_0) \leq C_0 B(x,y) \leq C_0 C (|x-x_0|+|y-y_0|)^{\alpha}. \]

\end{proof}

The next barrier is useful to analyze the H\"older continuity close to $B_1^0$ of solutions to $\dv y^a \grad u = 0$ in $B_1^+$. This barrier is not used in the rest of this section.
\begin{lemma} [Barrier for $B_1^0$] \label{l:barrier2}
Let $\alpha \in (0,1-a)$ and $x_0 \in B_1^0$. There is a $C \geq 1$ such that the function
\[ B(x,y) = |X-(x_0,0)|^\alpha + C y^{\alpha} \]
is a supersolution in the upper half space
\[ \dv y^a \grad B \leq 0 \qquad \text{for } y \in (0,1). \]
\end{lemma}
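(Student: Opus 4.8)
The plan is to verify the supersolution inequality $\dv y^a \grad B \le 0$ by splitting $B$ into its two pieces and using the same pointwise computation trick employed in Lemma \ref{l:barrier1}, namely $\dv y^a \grad B = y^a (\lap B + \tfrac a y \partial_y B)$. The term $|X-(x_0,0)|^\alpha$ is exactly of the homogeneous power type whose $\dv y^a \grad(\cdot)$ was already computed in the proof of Lemma \ref{l:barrier1} (with $X_0$ replaced by the boundary point $(x_0,0)$): one gets $\alpha y^a\big(n+\alpha-1 + a\tfrac{y-0}{y}\big)|X-(x_0,0)|^{\alpha-2} = \alpha y^a (n+a+\alpha-1)|X-(x_0,0)|^{\alpha-2}$. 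The crucial sign difference from Lemma \ref{l:barrier1} is that now the base point sits on $\{y=0\}$, so the factor $\tfrac{y-y_0}{y}$ becomes $\tfrac{y}{y}=1$, and the bracket $n+a+\alpha-1$ is in general \emph{positive} — this piece alone is a \emph{subsolution}, not a supersolution. So the whole burden falls on the corrective term $C y^\alpha$.

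Second I would compute $\dv y^a \grad (y^\alpha) = y^a(\partial_{yy} y^\alpha + \tfrac a y \partial_y y^\alpha) = y^a\big(\alpha(\alpha-1) + a\alpha\big) y^{\alpha-2} = \alpha(\alpha-1+a) y^{a+\alpha-2}$. Here the hypothesis $\alpha < 1-a$ is exactly what forces $\alpha - 1 + a < 0$, so this term is strictly negative: $\dv y^a\grad(y^\alpha) = -c_0\, y^{a+\alpha-2}$ with $c_0 = \alpha(1-a-\alpha) > 0$. The remaining step is a comparison of powers of $y$ against powers of $|X-(x_0,0)|$: since $|X-(x_0,0)|^2 = |x-x_0|^2 + y^2 \ge y^2$, we have $|X-(x_0,0)|^{\alpha-2} \le y^{\alpha-2}$ (the exponent $\alpha - 2$ is negative), hence
\[ \alpha y^a (n+a+\alpha-1)|X-(x_0,0)|^{\alpha-2} \le \alpha (n+a+\alpha-1)\, y^{a+\alpha-2}. \]
Choosing $C \ge 1$ large enough that $C c_0 \ge \alpha(n+a+\alpha-1)$ (any such $C$ works, and if $n+a+\alpha-1 \le 0$ then already $C=1$ suffices), we add the two contributions and obtain $\dv y^a \grad B \le \big(\alpha(n+a+\alpha-1) - C c_0\big) y^{a+\alpha-2} \le 0$ for all $y \in (0,1)$, which is the claim.

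The only mildly delicate point — and the one I'd expect to be the main obstacle — is making the power comparison $|X-(x_0,0)|^{\alpha-2} \le y^{\alpha-2}$ rigorous and uniform, together with checking that nothing blows up unfavorably: both terms of $\dv y^a \grad B$ carry the same negative power $y^{a+\alpha-2}$ of $y$, so the inequality is scale-consistent and no region (neither $y\to 0$ nor the interior nor $|x-x_0|$ large within $B_1$) causes trouble. One should also note that $B$ is smooth away from the single point $(x_0,0)$, so the differential inequality makes classical sense where it is needed, and at $(x_0,0)$ itself $B$ has a corner pointing upward (it is a genuine supersolution in the viscosity/barrier sense there), which is exactly the behavior wanted for a barrier anchored at $x_0$. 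No constraint on $C$ beyond $C\ge 1$ and the displayed largeness condition is needed, so the constant depends only on $n$, $a$ (equivalently $s$) and $\alpha$.
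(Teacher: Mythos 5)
Your computation is correct and is exactly the ``direct computation similar to the one of Lemma \ref{l:barrier1}'' that the paper omits: the piece $|X-(x_0,0)|^\alpha$ is a subsolution because the base point lies on $\{y=0\}$, and the term $Cy^\alpha$ (a supersolution precisely because $\alpha<1-a$) dominates it after the comparison $|X-(x_0,0)|^{\alpha-2}\le y^{\alpha-2}$. Nothing further is needed.
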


The proof is a direct computation similar to the one of Lemma \ref{l:barrier1}, so we omit it.





We now construct an auxiliary function that will be useful to construct barriers. Let $B$ be the only bounded solution of the following problem.
\begin{align*}
B(x,y) &= 0 \qquad \text{on } (\partial B_1)^+\\
-\lim_{y \to 0} y^a B(x,y) &= 1 \qquad \text{in } B_1^0 \\
\dv y^a \grad B &= 0  \qquad \text{in } B_1^+
\end{align*}

The explicit formula for $B$ can be computed using the intuition from \cite{caffarelli2007extension} that the equation in the upper half plane can be understood as the Laplace equation in fractional dimension.

\begin{equation} \label{e:formulaForB}
B(X) = \int_{B_1^0} \Phi(z-X) - |X|^{-n-a+1} \Phi(z-X/|X|^2) \dd z
\end{equation}
where $\Phi(X) = 1/|X|^{n-1+a}$ for any $X \in \R^{n+1}$ is the fundamental solution at the origin. It is elementary to check that this is the correct formula for $B$.

From the formula \eqref{e:formulaForB} we can see a couple of elementary properties of $B$:
\begin{enumerate}
\item There exists $c>0$ such that $B(x,0) \geq c (1-|x|)^s$.
\item The maximum of $B$ is achieved at $x=y=0$. Moreover there exists $c>0$ such that $B(0,0)-B(x,y) \geq c(|x|^2+y^{1-a})$.
\end{enumerate}

\begin{lemma} [Continuity on $\partial B_1^0 \times ( -1,0 )$] \label{l:fhe-regularitylateralboundary}
Let $u$ be a solution to the localized fractional heat equation \eqref{e:fh-extended}. Assume that for some $x_0 \in \partial B_1^0$ and $t_0 \in [-1,0]$ we know
\[ |u(t,x,y) - u(t_0,x_0,0)| \leq C_0 \left( |x-x_0|^2 + y^{1-a} + (t_0-t) \right)^\alpha \]
for all $(t,x,y) \in (\{-1\} \times B_1^+) \cup ([-1,t_0] \times (\partial B_1)^+)$. 
Then there is a constant $C>0$ such that
\[ |u(t,x,y) - u(t_0,x_0,0)| \leq C C_0 \left( |x-x_0|^2 + y^{1-a} + (t_0-t) \right)^{s\alpha/2} \ \ \text{for all } (t,x,y) \in [-1,t_0] \times \overline{B_1^+}. \]
Here $C$ depends on $\alpha$, $n$ and $a$ but not on $t_0$.
\end{lemma}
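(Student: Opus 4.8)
The plan is to construct an explicit barrier adapted to the point $(t_0, x_0, 0)$ on the lateral parabolic boundary and use the comparison principle for the equation \eqref{e:fh-extended}. The geometry of the problem suggests that the natural "parabolic distance" to the boundary point is $d(t,x,y) = |x-x_0|^2 + y^{1-a} + (t_0 - t)$, since this is precisely the combination that appears in the hypothesis and it respects the scaling of the equation (space scales like the square root of time for the $x$ variable, and the $y^{1-a}$ weight is the one singled out by the extension). So I would look for a supersolution of the form $W(t,x,y) = u(t_0,x_0,0) + C C_0\, d(t,x,y)^{s\alpha/2}$ (and symmetrically a subsolution), and the goal reduces to: (i) $W \geq u$ on the relevant part of the parabolic boundary $(\{-1\}\times B_1^+)\cup([-1,t_0]\times(\partial B_1)^+)$, and (ii) $W$ is a supersolution of \eqref{e:fh-extended} in $[-1,t_0]\times B_1^+$, meaning $\dv y^a \grad W \leq 0$ in the half ball for each fixed $t$, and $W_t - \lim_{y\to 0} y^a \partial_y W \geq 0$ on $B_1^0$.

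For step (i), the hypothesis gives $|u(t,x,y) - u(t_0,x_0,0)| \leq C_0 d(t,x,y)^\alpha$ on the boundary in question, and since $d \leq C$ there and $s\alpha/2 \leq \alpha$, we have $d^\alpha \leq C d^{s\alpha/2}$, so choosing $C$ large enough in $W$ handles this. The substantive work is step (ii). Here I would use the auxiliary function $B$ constructed just before the lemma, whose two listed properties — $B(x,0) \geq c(1-|x|)^s$ on $B_1^0$ and $B(0,0) - B(x,y) \geq c(|x|^2 + y^{1-a})$ near the origin — are exactly the ingredients needed: the first property lets $B$ control the Neumann-type boundary term (it gives a strictly negative contribution $-\lim_{y\to 0} y^a \partial_y B = 1$, which beats the drift of the exponent), and the second gives the right homogeneity near the boundary point. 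The exponent $s\alpha/2$ is presumably forced by matching the $(1-|x|)^s$ behavior of $B$ near $\partial B_1^0$ against the power $d^{s\alpha/2}$: raising $d^\alpha \sim (1-|x|)^{2\alpha}$-type estimates to a power and combining with the loss of a factor $s$ from the $(1-|x|)^s \leftrightarrow (1-|x|)$ mismatch. Concretely, I expect the barrier to be built by composing $B$ (or a translate/rescale of the explicit formula \eqref{e:formulaForB}) with the power function, plus a term linear in $(t_0 - t)$ to absorb the time derivative, plus possibly the boundary barrier from Lemma \ref{l:barrier2} localized near $x_0$ to handle the portion of the boundary on $B_1^0$ away from $x_0$.

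The main obstacle I anticipate is the loss of exponent: getting from the input modulus $d^\alpha$ to the output $d^{s\alpha/2}$ cleanly, and verifying that no better exponent is available from this barrier while this one genuinely works. This amounts to a careful but elementary computation checking the supersolution inequalities for $W$ — differentiating $d^{s\alpha/2}$ and showing the degenerate-elliptic operator applied to it is dominated by the favorable term coming from $B$, with the powers of $(1-|X|)$, $|X-(x_0,0)|$ and $y$ all balancing. A secondary point to be careful about: near the "corner" where $(\partial B_1)^+$ meets $B_1^0$ (i.e. $y\to 0$, $|x|\to 1$) one must check the barrier dominates there too, which is where the $(1-|x|)^s$ lower bound on $B(x,0)$ (rather than $(1-|x|)^1$) is essential and is the ultimate source of the factor $s$. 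Once the barrier is verified, the comparison principle for \eqref{e:fh-extended} — which for the extended problem is the standard maximum principle applied to the elliptic equation for fixed $t$ together with the sign of the Neumann data, exactly as used in Lemma \ref{l:stationary-boundaryofball} — gives $u \leq W$, and the symmetric argument with a subsolution finishes the proof.
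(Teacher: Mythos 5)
Your overall strategy (a barrier adapted to $(t_0,x_0,0)$ plus the comparison principle, with the auxiliary function $B$ supplying the $(1-|x|)^s$ boundary decay that produces the factor $s$ in the exponent) is the right one, but the proposal has a genuine gap at its center: the barrier is never actually constructed, and the explicit candidate you do name, $W = u(t_0,x_0,0) + CC_0\, d^{s\alpha/2}$ with $d = |x-x_0|^2 + y^{1-a} + (t_0-t)$, is not a supersolution. Indeed $\dv(y^a\grad d) = 2n\,y^a > 0$, so $d$ is a strict subsolution of the elliptic part, and the concavity correction $\beta(\beta-1)d^{\beta-2}y^a|\grad d|^2$ in $\dv(y^a\grad d^{\beta})$ only wins where $(1-\beta)\,|\grad d|^2 \gtrsim d$; at points where $y$ and $t_0-t$ are both of order one, $|\grad d|^2 = 4|x-x_0|^2+(1-a)^2y^{-2a}$ is bounded while $d\geq t_0-t$ is not small, so the sign of $\dv(y^a\grad d^{\beta})$ comes out wrong. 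You do acknowledge that $B$ must be folded in, but deciding exactly how, and then verifying the resulting inequality, which you defer as ``a careful but elementary computation,'' is essentially the entire content of the lemma, so the proof is not there yet.

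For comparison, the paper sidesteps any nonlinear supersolution verification. It sets $U(t,x,y) = (x_0-x)\cdot x_0 + \tfrac{1}{1-a}y^{1-a} + (t_0-t) + 2B(x,y)$, which is an \emph{exact solution} of \eqref{e:fh-extended} (the Neumann datum $2$ of $2B$ cancels the $-1-1$ contributed by the time slope and the $y^{1-a}$ term, and each summand satisfies $\dv y^a\grad(\cdot)=0$ in $B_1^+$), and records the two-sided bound $r\leq U\leq Cr^{s/2}$, the upper bound coming from $B\lesssim \mathrm{dist}(\cdot,\partial B_1)^s\lesssim r^{s/2}$. Then, instead of taking a power of anything, it compares $u-u(t_0,x_0,0)$ with the one-parameter family of exact solutions $C_0\rho^{\alpha-1}U + C_0\rho^{\alpha}$, which dominates the boundary data $C_0r^{\alpha}$ because $r^{\alpha}\leq\max(\rho^{\alpha-1}r,\rho^{\alpha})$, and optimizes $\rho = 4r_1^{s/2}$ at each target point to get $CC_0r_1^{s\alpha/2}$. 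If you prefer your power-composition route, the correct object to raise to the power $\alpha$ is this $U$, not $d$: since $U$ is a nonnegative exact solution, $U^{\alpha}$ is automatically a supersolution by concavity, dominates $C_0r^{\alpha}$ on the parabolic boundary since $U\geq r$, and is at most $Cr^{s\alpha/2}$ inside. Either way, the specific combination $U$ has to be exhibited; that is the missing step.
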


\begin{proof}
Let $r = (|x-x_0|^2/4 + y^{1-a}/(1-a) + (t_0-t))$. Let $U$ be the following barrier function
\[ U(t,x,y) = (x_0-x) \cdot x_0 +  \frac 1 {1-a} y^{1-a} + (t_0-t) + 2 B(x,y) \]
where $B(x,y)$ is the function in \eqref{e:formulaForB}.

We see that $r \leq U \leq Cr^{s/2}$ in $[-1,t_0] \times \overline B_1^+$.

For a given $(t_1,x_1,y_1) \in [-1,t_0] \times B_1^+$, let us choose $\rho = 4 r_1^{s/2}$ (where $r_1 = |x_1-x_0|^2/4 + y_1^{1-a}/(1-a) + (t_0-t_1)$) so that on $[-1,t_0] \times (\partial B_1)^+ \cup \{-1\} \times B_1^+$ we have
\[ C_0 \rho^{\alpha-1} U + C_0 \rho^\alpha \geq u(t,x,y) - u(t_0,x_0,0) \]
Since both the left hand side and right hand side solve the localized fractional heat equation, then also for all $(t,x,y) \in [-1,t_0] \times \overline B_1^+$
\[ u(t,x,y) - u(t_0,x_0,0) \leq C_0 \rho^{\alpha-1} U + C_0 \rho^\alpha \] 
In particular  \[ u(t_1,x_1,y_1) - u(t_0,x_0,0) \leq \leq C C_0 r_1^{s\alpha/2}. \]
The inequality from the other side follows by applying the same barrier bound to $u(t_0,x_0,0) - u$.
\end{proof}

\begin{lemma}[Continuity on $t=0$]\label{l:fhe-regularitybottomboundary}
Let $u$ be a solution to the localized fractional heat equation \eqref{e:fh-extended}.
Assume that for some $x_0 \in B_1^0$ we know
\begin{equation} \label{e:h-bbfhe}
|u(t,x,y) - u(-1,x_0,0)| \leq C_0 \left( |x-x_0|^2 + y^{1-a} + t+1 \right)^\alpha \ \ \end{equation}
for all $(t,x,y) \in \{-1\} \times B_1^+ \cup [-1,0] \times \partial B_1^+$.
Then there is a constant $C>0$ such that
\[ |u(t,x,y) - u(-1,x_0,0)| \leq C C_0 \left( |x-x_0|^2 + y^{1-a} + t+1 \right)^{s\alpha/2} \ \ \text{for all } (t,x,y) \in [0,T] \times \overline{B_1^+}. \]
where $C$ depends on $\alpha$, $n$ and $a$.
\end{lemma}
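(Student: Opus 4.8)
\emph{Proof strategy.} The argument follows the pattern of Lemma~\ref{l:fhe-regularitylateralboundary}: one builds an explicit barrier for the localized fractional heat equation, rescales it, and applies the comparison principle. The simplification here is that, since the base point $x_0$ lies in the interior of $B_1^0$ rather than on $\partial B_1^0$, the corner barrier $B$ from \eqref{e:formulaForB} is not needed. Write $r(t,x,y)=|x-x_0|^2+y^{1-a}+(t+1)$, so that \eqref{e:h-bbfhe} reads $|u-u(-1,x_0,0)|\le C_0\,r^\alpha$ on $\{-1\}\times B_1^+\cup[-1,0]\times\partial B_1^+$. I would take as barrier
\[ U(t,x,y)=\frac{C_3}{1-a}\,y^{1-a}+C_3(t+1)+2\Bigl(|x-x_0|^2-\frac{n}{1+a}\,y^2\Bigr), \]
with $C_3=C_3(n,a)$ to be chosen large. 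Both summands $\tfrac1{1-a}y^{1-a}+(t+1)$ and $|x-x_0|^2-\tfrac{n}{1+a}y^2$ solve \eqref{e:fh-extended} — the first because $\partial_t(t+1)=1=\lim_{y\to0}y^a\partial_y\bigl(\tfrac1{1-a}y^{1-a}\bigr)$, the second because, up to a translation in $x$, it is one of the zero-Neumann special solutions listed in Section~\ref{s:extension} — hence so does $U$.

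A direct computation (using $y^2\le y^{1-a}$ for $0<y<1$, since $1-a<2$, to keep the coefficient of $y^{1-a}$ positive after subtracting $\tfrac{2n}{1+a}y^2$) shows that for $C_3$ large enough, depending only on $n$ and $a$, one has $r\le U\le \bar C\,r$ on $[-1,0]\times\overline{B_1^+}$ for some $\bar C=\bar C(n,a)$. Now fix a target point $(t_1,x_1,y_1)\in(-1,0]\times B_1^+$, set $r_1=r(t_1,x_1,y_1)$, $\rho=r_1^{s/2}$, and $\Psi=C_0\bigl(\rho^{\alpha-1}U+\rho^\alpha\bigr)$, which is again a solution of \eqref{e:fh-extended}. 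From $U\ge r$ and the elementary inequality $\rho^{\alpha-1}U+\rho^\alpha\ge U^\alpha$ — valid for every $\rho>0$ since $\tau+1\ge\tau^\alpha$ for $\tau\ge0$, $\alpha\in(0,1)$ — one gets $\Psi\ge C_0\,r^\alpha$, so by \eqref{e:h-bbfhe} both $\Psi$ and $-\Psi$ bound $u-u(-1,x_0,0)$ on the parabolic boundary $\{-1\}\times\overline{B_1^+}\cup[-1,0]\times(\partial B_1)^+$.

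By the comparison principle for \eqref{e:fh-extended}, $|u-u(-1,x_0,0)|\le\Psi$ on all of $[-1,0]\times\overline{B_1^+}$. Evaluating at $(t_1,x_1,y_1)$, using $U(t_1,\cdot)\le\bar C\,r_1$ and that $r_1$ is bounded by an absolute constant on the cylinder (so $r_1^{1-s/2}\le C$, since $s<1/2$),
\[ |u(t_1,x_1,y_1)-u(-1,x_0,0)|\le C_0\bigl(\bar C\,r_1^{\,1+s(\alpha-1)/2}+r_1^{\,s\alpha/2}\bigr)\le C\,C_0\,r_1^{\,s\alpha/2}. \]
Since $(t_1,x_1,y_1)$ was arbitrary in the interior, this extends to $[-1,0]\times\overline{B_1^+}$ by continuity, which is the assertion.

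The one ingredient that is not a mechanical computation is the comparison principle for \eqref{e:fh-extended} on a space-time cylinder: that sub/supersolutions ordered on $\{-1\}\times\overline{B_1^+}\cup[-1,0]\times(\partial B_1)^+$ stay ordered, the flat face $[-1,0]\times B_1^0$ being controlled by the Hopf-type behavior of the Dirichlet-to-Neumann equation. This is precisely the tool already used in Lemmas~\ref{l:stationary-boundaryofball} and \ref{l:fhe-regularitylateralboundary}, so I expect no genuine obstacle; the remaining verifications — the two-sided bound $r\le U\le\bar C\,r$ and the Young-type inequality — are elementary. (Incidentally, choosing $\rho\sim r_1$ rather than $r_1^{s/2}$ would yield the sharper exponent $\alpha$ in place of $s\alpha/2$; we record $s\alpha/2$ only to keep the statement parallel to Lemma~\ref{l:fhe-regularitylateralboundary}.)
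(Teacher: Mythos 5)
Your proof is correct and follows essentially the same route as the paper's: an explicit barrier assembled from the special solutions listed in Section~\ref{s:extension}, shown to be comparable to $r$ from both sides, and then the rescaled comparison-principle argument on the parabolic boundary. The paper in fact takes $\rho \sim r_1$ (your ``sharper choice''), so its proof delivers the exponent $\alpha$ rather than the stated $s\alpha/2$; beyond that the two arguments differ only in the constants defining the barrier.
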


\begin{proof}
We will use the following barrier function
\[ U(t,x,y) = |x-x_0|^2 + \frac n {(1+a)} \left( -y^2 + 2 y^{1-a} \right) + 2n(t+1) \]
which is a solution to \eqref{e:fh-extended}. Let $r = |x-x_0|^2 + y^{1-a} + t+1$. Note that there are constants $c$ and $C$ such that $cr \leq U \leq Cr$.

From \eqref{e:h-bbfhe}, 
\[ |u(t,x,y) - u(-1,x_0,0)| \leq C_0 r^\alpha \ \ \text{for all } (t,x,y) \in \{-1\} \times B_1^+ \cup [-1,0] \times (\partial B_1)^+ \]
Therefore, for any $r_1>0$, on the parabolic boundary $(t,x,y) \in \{-1\} \times B_1^+ \cup [-1,0] \times (\partial B_1)^+$ we have
\[ u(t,x,y) - u(-1,x_0,0) \leq C_0 c r_1^{\alpha-1} U + C_0 r_1^\alpha \]
For any point $(t_1,x_1,y_1)$ and corresponding value of $r_1$, we apply comparison principle to obtain that
\[ u(t,x_1,y_1) - u(-1,x_0,0) \leq C_0 C r_1^\alpha. \]
Using the same argument to bound $u(-1,x_0,0) - u$ we finish the proof.
\end{proof}

\begin{lemma} [Main result of this section] \label{l:parabolic-boundary-continuity}
Let $u$ be a solution to the localized fractional heat equation \eqref{e:fh-extended}. Assume that $||u||_{L^\infty([-1,0] \times B_1^+)} \leq C_0$ and the following regularity for the boundary data:
\begin{align}
||u||_{C^\alpha(\{-1\} \times B_1^+)} &\leq C_0 \label{e:pbc1}\\
||u||_{C^\alpha([-1,0] \times \partial B^0_1)} &\leq C_0 \label{e:pbc2}\\
||u(t,.,.)||_{C^\alpha((\partial B_1)^+)} &\leq C_0 \ \ \text{for all } t \in [-1,0] \label{e:pbc3}
\end{align}
There is a constant $C$ such that if $(t,x,y)$ is any point in $[-1,0] \times B_1^+$ and $(t^*,x^*,y^*)$ is either its closest point on $\{t\} \times (\partial B_1)^+$ if $y>0$ or its closest point on $(\{-1\} \times B_1^0) \cup ([-1,0] \times \partial B_1^0)$ if $y=0$, then 
\begin{equation} \label{e:pbcc}
 |u(t,x,0) - u(t^*,x^*,y^*)| \leq C C_0 (|x-x^*|+|y-y^*|+|t-t^*|)^{\tilde \alpha}
\end{equation}
for some $\tilde \alpha > 0$ depending on $s$, $n$ and $\alpha$ only.
\end{lemma}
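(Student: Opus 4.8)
The plan is to patch together the three boundary lemmas already proved: Lemma~\ref{l:stationary-boundaryofball} (elliptic, on one fixed time slice) for the case $y>0$, and Lemmas~\ref{l:fhe-regularitylateralboundary} and \ref{l:fhe-regularitybottomboundary} for the case $y=0$, according as the nearest boundary point lies on $[-1,0]\times\partial B_1^0$ or on $\{-1\}\times B_1^0$. One may always assume the distance $d:=|x-x^*|+|y-y^*|+|t-t^*|$ is smaller than a fixed constant, since otherwise \eqref{e:pbcc} follows at once from $d^{\tilde\alpha}\ge c$ and $|u|\le C_0$. (As in the stated estimate one bounds $u$ at the point itself; for $y>0$ this means $u(t,x,y)$, which at $y=0$ is $u(t,x,0)$.)

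The only genuinely new step is to recast the hypotheses \eqref{e:pbc1}--\eqref{e:pbc3}, which carry an isotropic modulus and no joint regularity in time on $(\partial B_1)^+$, into the single anisotropic estimate: there is an exponent $\gamma\in(0,\alpha]$ depending only on $n,a,\alpha$ such that, for every corner point $(t_0,x_0,0)$ with $t_0\in[-1,0]$ and $|x_0|\le 1$, and every point $(t,x,y)$ of the parabolic Dirichlet boundary $\big(\{-1\}\times\overline{B_1^+}\big)\cup\big([-1,0]\times\overline{(\partial B_1)^+}\big)$,
\[ |u(t,x,y)-u(t_0,x_0,0)|\le C C_0\big(|x-x_0|^2+y^{1-a}+|t-t_0|\big)^{\gamma}. \]
For $(t,x,y)\in\{-1\}\times\overline{B_1^+}$ this is immediate from \eqref{e:pbc1}; for $(t,x,y)\in[-1,0]\times\overline{(\partial B_1)^+}$ one inserts the equator point $\bar x=x/|x|\in\partial B_1^0$ and writes
\[ u(t,x,y)-u(t_0,x_0,0)=\big(u(t,x,y)-u(t,\bar x,0)\big)+\big(u(t,\bar x,0)-u(t_0,\bar x,0)\big)+\big(u(t_0,\bar x,0)-u(t_0,x_0,0)\big), \]
bounding the three brackets by \eqref{e:pbc3}, by \eqref{e:pbc2}, and (according as $x_0$ lies on $\partial B_1^0$ or in its interior) by \eqref{e:pbc2} or \eqref{e:pbc1}. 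Converting the resulting isotropic moduli into $|x-x_0|^2+y^{1-a}+|t-t_0|$ uses only elementary inequalities such as $y^2\le y^{1-a}$ for $y\in[0,1]$ and $1-|x|\le y^2$ on $(\partial B_1)^+$, and it is this conversion that forces the loss from $\alpha$ down to $\gamma$; the point to watch is that $\gamma$ and the constant stay uniform as $x_0\to\partial B_1^0$, i.e.\ near the double corner $\{-1\}\times\partial B_1^0$.

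Granting the displayed estimate, the rest is bookkeeping. If $y=0$ and $(t^*,x^*,0)$ lies on $\{-1\}\times B_1^0$ (so $t^*=-1$, $x^*=x$), apply Lemma~\ref{l:fhe-regularitybottomboundary} with $x_0=x$ (its hypothesis is supplied by the display above, on $(\{-1\}\times B_1^+)\cup([-1,0]\times(\partial B_1)^+)$); evaluated at $(t,x,0)$ it gives $|u(t,x,0)-u(-1,x,0)|\le C C_0(t+1)^{s\gamma/2}$. If instead $(t^*,x^*,0)$ lies on $[-1,0]\times\partial B_1^0$ (so $t^*=t$, $x^*=x/|x|$), apply Lemma~\ref{l:fhe-regularitylateralboundary} with $(t_0,x_0)=(t,x/|x|)$; evaluated at $(t,x,0)$, where the base reduces to $(1-|x|)^2/4$, it gives $|u(t,x,0)-u(t,x/|x|,0)|\le C C_0(1-|x|)^{s\gamma}$. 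In either case the right side is bounded by $C C_0\,d^{\,s\gamma/2}$. If $y>0$, fix $t$: then $u(t,\cdot,\cdot)$ solves $\dv y^a\grad u=0$ in $B_1^+$, it is $C^\alpha$ on $(\partial B_1)^+$ by \eqref{e:pbc3}, and its trace on $\overline{B_1^0}$ is Hölder, by Proposition~\ref{p:fh-cinfinity} in the interior together with the case $y=0$ near $\partial B_1^0$; passing once more through equator points one checks the hypothesis of Lemma~\ref{l:stationary-boundaryofball} at the point $(x^*,y^*)\in(\partial B_1)^+$ nearest to $(x,y)$, and the lemma returns $|u(t,x,y)-u(t,x^*,y^*)|\le C C_0\big((x-x^*)^2+(y-y^*)^2\big)^{\gamma'}$ with $\gamma'$ again a fixed fraction of $s\gamma$. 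Taking $\tilde\alpha$ to be the smallest exponent encountered — hence a fixed fraction of $s\alpha$, depending only on $s,n,\alpha$ — yields \eqref{e:pbcc}.

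The principal difficulty is exactly the anisotropic repackaging of the second paragraph: matching the isotropic moduli of \eqref{e:pbc1}--\eqref{e:pbc3} against the parabolic quantity $|x-x_0|^2+y^{1-a}+|t-t_0|$ required by Lemmas~\ref{l:fhe-regularitylateralboundary}--\ref{l:fhe-regularitybottomboundary}, with constants that do not degenerate near the equator $\partial B_1^0$; everything after that is a mechanical combination of barrier estimates already in hand.
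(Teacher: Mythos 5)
Your proposal is correct and follows essentially the same route as the paper: treat $y=0$ first by converting the isotropic hypotheses \eqref{e:pbc1}--\eqref{e:pbc3} into the anisotropic form $\left(|x-x_0|^2+y^{1-a}+|t-t_0|\right)^{\gamma}$ required by Lemmas \ref{l:fhe-regularitybottomboundary} and \ref{l:fhe-regularitylateralboundary} (a step the paper compresses into ``with $\alpha/2$ instead of $\alpha$''), and then feed the resulting boundary estimate together with \eqref{e:pbc3} into Lemma \ref{l:stationary-boundaryofball} on each time slice for $y>0$. The only superfluous ingredient is the appeal to Proposition \ref{p:fh-cinfinity} in the $y>0$ step (whose interior estimates are anyway only available on $[-1/2,0]\times B_{1/2}$): chaining through the equator point $\bar x=x/|x|$ and the already-established $y=0$ lateral estimate is enough, exactly as in the paper, so this does not affect correctness.
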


\begin{proof}
We will start the proof by analyzing the case $y=0$. In this case $y^*=y=0$.

The point $(t^*,x^*,0)$ belongs either to the bottom of the parabolic boundary: $\{-1\} \times B_1^0$ or the lateral boundary $[-1,0] \times \partial B_1^0$. We will analyze each of the two cases.

If $(t^*,x^*,0) \in \{-1\} \times B_1^0$, then from \eqref{e:pbc1}, \eqref{e:pbc2} and \eqref{e:pbc3}, the hypothesis of Lemma \ref{l:fhe-regularitybottomboundary} are satisfied (with $\alpha/2$ instead of $\alpha$). So we apply Lemma \ref{l:fhe-regularitybottomboundary} and prove \eqref{e:pbcc} with $\tilde \alpha = s^2 \alpha/2$

If $(t^*,x^*,0) \in [-1,0] \times \partial B_1^0$, then from \eqref{e:pbc1}, \eqref{e:pbc2} and \eqref{e:pbc3}, the hypothesis of Lemma \ref{l:fhe-regularitylateralboundary} are satisfied (with $\alpha/2$ instead of $\alpha$). So we apply Lemma \ref{l:fhe-regularitylateralboundary} and prove \eqref{e:pbcc} with $\tilde \alpha = s^2 \alpha/2$

So we have finished the proof in the case $y=0$. What we proved so far implies that for every $(t_0,x_0,y_0) \in [-1,0] \times (\partial B_1)^+$ and $(t_0,x,y) \in [-1,0] \times \partial B_1^+$ (recall $\partial B_1^+ = (\partial B_1)^+ \cup B_1^0$), we have
\begin{equation} \label{e:pbc4}
|u(t^*,x,y) - u(t^*,x^*,y^*)| \leq C C_0 (|x-x^*|+|y-y^*|)^{\tilde \alpha} \end{equation}

Let us analyze the case $y > 0$ now. In this case $(x^*,y^*) \in \partial B_1$. On the other hand, from \eqref{e:pbc3} and \eqref{e:pbc4}, we can apply Lemma \ref{l:stationary-boundaryofball} with $x_0 = x^*$ and $y_0=y^*$ and finish the proof.
\end{proof}

\begin{remark}
Note that the barriers in this section could be used (together with the comparison principle) to show the existence of the solution of the Dirichlet problem for \eqref{e:fh-extended} by Perron's method.
\end{remark}

\section{H\"older estimates for the equation with drift}
\label{s:holderEstimates}
Our assumption $b \in C^{1-2s+\alpha}$ means that the vector field $b$ is slightly better than $C^{1-2s}$. From \cite{silvestrePreprintHolder}, just from $b$ being in the class $C^{1-2s}$, we can obtain that the solution $u$ is H\"older continuous. In this section, we adapt the result of \cite{silvestrePreprintHolder} to the extended problem which allows us to localize the estimate.

\begin{prop} \label{p:holder-local-extended}
Let $u$ be a bounded function in $[-1,0] \times B_1^+$ solving
\begin{equation} \label{e:mainequation-extended}
\begin{aligned}
u_t +b \cdot \grad u- \lim_{y \to 0} y^a \partial_y u(t,x,y) &= f && \text{on }  (-1,0] \times B_1^0 \\
\dv y^a \grad u &= 0 && \text{in } [-1,0] \times B_1^+
\end{aligned}
\end{equation}
where $b$ is a vector field in $[-1,0] \times B_1$ which is $C^{1-2s}$ in $x$ and $f$ is a scalar function in $L^\infty([-1,0]\times B_1^0)$. Then $u$ is H\"older continuous in $[-1/2,0] \times B_{1/2}^0$ and also in $\{t\} \times B_{1/2}^+$ for every $t \in [-1/2,0]$
\[ ||u||_{C^\alpha( [-1/2,0] \times B_{1/2}^0)} + ||u||_{L^\infty([-1/2,0],C^\alpha(B_{1/2}^+))} \leq C ||u||_{L^\infty((-1,0] \times B_1^+)} \]
for some constants $\alpha>0$ and $C$ depending on $s$, $\norm{b}_{C^{1-2s}}$ and the dimension $n$.
\end{prop}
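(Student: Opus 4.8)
The plan is to localize the Hölder estimate from \cite{silvestrePreprintHolder} to the extended problem. The essential point is that the De Giorgi--Nash--Moser--type machinery used in that paper to establish Hölder continuity for solutions of $u_t + b\cdot\grad u + (-\lap)^s u = f$ with $b\in C^{1-2s}$ (equivalently $b\in L^\infty$ when $s=1/2$) can be run with localized test functions once the equation is rewritten in the degenerate-elliptic extended form \eqref{e:mainequation-extended}. The interior regularity in $\{t\}\times B_{1/2}^+$ for fixed $t$ will then follow for free, because for each $t$ the function $u(t,\cdot,\cdot)$ is a bounded solution of the Dirichlet problem $\dv y^a \grad u = 0$ in $B_1^+$ with Hölder boundary trace on $B_1^0$, and the barrier argument of Lemma \ref{l:stationary-boundaryofball} (or equivalently Lemma \ref{l:parabolic-boundary-continuity}, restricted to a fixed time slice) upgrades Hölder continuity on $B_{1/2}^0$ to Hölder continuity up to the interior of $B_{1/2}^+$.

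First I would reduce to the case $f=0$ by subtracting a suitable correction, or simply carry the $L^\infty$ norm of $f$ along as an additive term in all estimates (since $f$ enters linearly and at lowest order, it does not affect the structure of the argument); I would also normalize $\|u\|_{L^\infty((-1,0]\times\R^n)}\le 1$ by scaling, noting that tails of $u$ outside $B_1$ are absorbed into the right-hand side through the Poisson-kernel representation of the extension. Second, the heart of the matter: establish the diminish-of-oscillation estimate. Choosing a space-time cylinder $Q_r=(-r^{2s},0]\times B_r^0$ near the origin, one shows $\osc_{Q_{r/2}^+} u \le (1-\theta)\,\osc_{Q_r^+} u + C r^{\gamma}\|f\|_{L^\infty}$ for some fixed $\theta\in(0,1)$; this is exactly the step where the drift term is handled by the scaling-critical bound $b\in C^{1-2s}$, and where the degenerate weight $y^a$ forces one to use the weighted energy/Caccioppoli inequality \eqref{e:cacciopoli} and the weighted Sobolev and Poincaré inequalities of \cite{FKS} in place of their classical counterparts. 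The point-measure estimate (a bounded solution that is, say, $\ge 0$ on a cylinder and lies below $1$ cannot be too close to $0$ on too large a proportion of a smaller cylinder, pushed down via a barrier built from the function $B$ of \eqref{e:formulaForB} and the comparison principle) together with the measure-to-pointwise covering argument yields the decay of oscillation; iterating over dyadic scales produces the $C^\alpha$ bound on $[-1/2,0]\times B_{1/2}^0$.

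Third, I would patch the time-slice interior regularity: fix $t\in[-1/2,0]$; then $u(t,\cdot,\cdot)$ solves $\dv y^a\grad u=0$ in $B_{3/4}^+$ with $u(t,\cdot,0)\in C^\alpha(B_{3/4}^0)$ already known, so by Lemma \ref{l:stationary-boundaryofball} applied around each boundary point together with the interior Hölder estimate for $\dv y^a\grad(\cdot)=0$ from \cite{FKS}, we get $\|u(t,\cdot,\cdot)\|_{C^\alpha(B_{1/2}^+)}\le C$, with constant uniform in $t$. Summing the two contributions gives the stated inequality. The main obstacle I anticipate is the diminish-of-oscillation step: one must check that every ingredient of the Hölder proof of \cite{silvestrePreprintHolder} --- which in its original form uses the nonlocal operator and its integral kernel directly --- has a genuine local counterpart in the extended variables, and in particular that the error terms generated by the smooth cutoffs (the terms $\dv y^a\grad\eta$ supported in an annulus, with the key property $\lim_{y\to0} y^a\partial_y\eta = 0$ already noted in Proposition \ref{p:fh-cinfinity}) are controlled by the $L^\infty$ norm of $u$ and do not destroy the geometric factor $(1-\theta)$. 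Since $b$ is only continuous and not divergence-free, one cannot integrate the drift by parts, so the drift must be frozen pointwise and its oscillation $|b(t,x)-b(t,0)|\le \|b\|_{C^{1-2s}}|x|^{1-2s}$ absorbed at the critical scaling --- this is precisely what the hypothesis $b\in C^{1-2s}$ is designed to permit, and it is the delicate quantitative point of the whole section.
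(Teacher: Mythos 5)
Your route is genuinely different from the paper's, and the comparison is instructive. You propose to re-run the De Giorgi/oscillation-decay machinery of \cite{silvestrePreprintHolder} directly in the localized extended setting, and you correctly flag the hard part yourself: verifying that every ingredient of that proof survives the localization (cutoff errors, the drift frozen at a point with its $C^{1-2s}$ oscillation absorbed at critical scaling, weighted Caccioppoli and Sobolev inequalities). The paper never does any of this. Instead it reduces the local statement to the \emph{global} theorem of \cite{silvestrePreprintHolder} by a short decomposition: let $v$ be the extension to the upper half space of $u(t,\cdot,0)$ cut off to be zero outside $B_1^0$. Then $v-u$ vanishes on $B_1^0$ and solves $\dv y^a \grad(v-u)=0$ in $B_1^+$, so Lemma \ref{l:ext-flocal} (the boundary expansion $y^{1-a}\tilde g(x)+O(y^2)$, proved via boundary Harnack) shows that $g(t,x)=\lim_{y\to 0}y^a\partial_y(v-u)$ is bounded on $[-1,0]\times B_{3/4}^0$ with a bound depending only on $\|u\|_{L^\infty}$. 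Since $v_t=u_t$ and $\grad_x v=\grad_x u$ on $B_1^0$, the globally defined $v$ solves the same drift equation with right-hand side $f+g$, and the global H\"older theorem applies to it as a black box; the fixed-time-slice estimate in $B_{1/2}^+$ then follows from barriers exactly as in your third step. What the paper's approach buys is that the entire "check that the machinery localizes" step disappears; what your approach would buy, if carried out, is a self-contained local proof not relying on the global statement.

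As written, though, your proposal has a real gap: the diminish-of-oscillation estimate in the localized extended geometry is asserted, not proved, and it is precisely the content of the proposition. In particular your remark that "tails of $u$ outside $B_1$ are absorbed into the right-hand side through the Poisson-kernel representation" is the right instinct but is not quantified; the quantitative fact you need --- that the influence of the lateral boundary data on $(\partial B_1)^+$ enters the Neumann condition on $B_{3/4}^0$ only through a \emph{bounded} (indeed differentiable) source term --- is exactly Lemma \ref{l:ext-flocal} applied to $v-u$. If you make that one observation precise, you can skip the entire re-derivation of the oscillation decay and quote the global theorem instead.
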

  
\begin{proof}
Consider the function $v : [-1,0] \times \R^n \times \R^+$ defined by
\begin{align*}
v(t,x,0) &= \begin{cases}
u(t,x,0) & \text{if } |x| < 1 \\
0 & \text{otherwise}
\end{cases} \\
\dv y^a \grad v &= 0 \ \ \text{for } y>0
\end{align*}

Note that the function $u-v$ satisfies
\begin{align*}
(v-u)(t,x,0) &= 0 \ \ \text{in } B_1^0,\\
\dv y^a \grad (v-u) &=0 \ \ \text{in } B_1^+.
\end{align*}

Moreover, $\norm{v-u}_{L^\infty} \leq 2\norm{u}_{L^\infty}$.
Thus, for every fixed value of $t$ we can apply Lemma \ref{l:ext-flocal} to obtain that 
\[ g(t,x) = \lim_{y \to 0} y^a \partial_y (v-u)(t,x,y) \]
is differentiable in $x$ (with an estimate depending only on $\norm{u}_{L^\infty}$. In particular, $g$ is bounded on $[-1,0] \times B_{3/4}^0$.

Now, let us analyze the equation for $v$. We see that $v_t=u_t$ and $\grad v = \grad u$ on $[-1,0] \times B_1^0$. Therefore $v$ satisfies the equation.
\[
\begin{aligned}
v_y +b \cdot \grad v- \lim_{y \to 0} y^a \partial_y v(t,x,y) &= f+g && \text{on } (-1,0] \times B_1^0 \\
\dv y^a \grad v &= 0 && \text{in } [-1,0] \times B_1^+
\end{aligned}
\]
Since the right hand side $g$ is bounded in $[-1,0] \times B_{3/4}^0$, we can apply the main theorem from \cite{silvestrePreprintHolder} to obtain that $v$ is H\"older continuous in $[-1/2,0] \times B_{2/3}^+$. That means that $u$ is H\"older continuous on $[-1/2,0] \times B_{2/3}^0$. From this, one concludes that $u$ is H\"older continuous on $\{t\} \times B_{1/2}^+$ for each $t \in [-1/2,0]$ using the barrier from Lemma \ref{l:barrier2} and that $u(t,.,.)$ solves the equation $\dv y^a \grad u = 0$ in $B_1^+$.
\end{proof}

\section{The approximation lemma}
\label{s:perturbation}

The main lemma in this section says that if the vector field $b$ and the right hand side $f$ are very small, then $u$ differs very little from the solution to the fractional heat equation with the same boundary condition. This lemma is important in order to carry out our proof by perturbation. Indeed, the main idea of the proof of Theorem \ref{t:main} will be to approximate the solution $u$ with the smooth solution of the drift-less problem with zero right hand side.

\begin{lemma} \label{l:perturbation}
Let $u$ be a continuous function that solves 
\begin{equation} \label{e:perturbed-extended}
\begin{aligned}
u_t + b \cdot \grad u- \lim_{y \to 0} y^a \partial_y u(t,x,y) &= f && \text{on } (-1,0] \times B_1^0 \\
\dv y^a \grad u &= 0 && \text{in } [-1,0] \times B_1^+
\end{aligned}
\end{equation}
Assume that $\norm{u}_{L^\infty} \leq 1$, $\norm{u}_{C^\alpha([-1,0] \times B_1^0)} \leq C$ and $\norm{u}_{L^\infty([-1,0],C^\alpha(B_1^+))} \leq C$ for some $\alpha>0$. Let $v$ be a solution of the driftless problem
\begin{equation} \label{e:perturbed-driflessproblem}
\begin{aligned}
&v_t(t,x,0) - \lim_{y \to 0} y^a \partial_y v(t,x,y) = 0 \ \ \text{on } (-1,0] \times B_1^0 \\
&\dv y^a \grad v = 0 \ \ \text{in }  [-1,0] \times B_1^+ \\
&v(t,x,y) = u(t,x,y) \ \  \text{on the parabolic boundary: } ([-1,0] \times (\partial B_1)^+) \cup (\{-1\} \times B_1^+).
\end{aligned}
\end{equation}
Then for every $\eps>0$ there exists a $\delta>0$ so that $\norm{u-v}_{L^\infty([-1,0] \times B_1^+)} < \eps$ every time that $||b||_{L^\infty([-1,0] \times B_1^0)} < \delta$ and $||f||_{L^\infty([-1,0] \times B_1^0)} < \delta$.
\end{lemma}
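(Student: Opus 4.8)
The plan is to argue by contradiction using a compactness argument. Suppose the conclusion fails: then there is an $\eps_0 > 0$ and sequences $b_k$, $f_k$ with $\|b_k\|_{L^\infty([-1,0]\times B_1^0)} \to 0$ and $\|f_k\|_{L^\infty([-1,0]\times B_1^0)} \to 0$, together with corresponding solutions $u_k$ of \eqref{e:perturbed-extended} satisfying $\|u_k\|_{L^\infty}\le 1$, $\|u_k\|_{C^\alpha([-1,0]\times B_1^0)}\le C$, and $\|u_k\|_{L^\infty([-1,0],C^\alpha(B_1^+))}\le C$, but with $\|u_k - v_k\|_{L^\infty([-1,0]\times B_1^+)} \ge \eps_0$, where $v_k$ solves the drift-less problem \eqref{e:perturbed-driflessproblem} with boundary data $u_k$. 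The uniform $C^\alpha$ bounds on the parabolic boundary $([-1,0]\times(\partial B_1)^+)\cup(\{-1\}\times B_1^+)$ together with the barrier estimates of Section \ref{s:boundaryEstimatesForFHE} (specifically Lemma \ref{l:parabolic-boundary-continuity} applied to the $v_k$) give a uniform modulus of continuity for $v_k$ up to the parabolic boundary; interior regularity for the degenerate elliptic equation $\dv y^a \grad v_k = 0$ (Harnack from \cite{FKS}) and Proposition \ref{p:fh-cinfinity} give interior compactness. Similarly the $u_k$ are equicontinuous: the hypotheses give interior and boundary moduli directly, and on the slices $\{t\}\times B_1^+$ we again use $\dv y^a\grad u_k = 0$. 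So, passing to a subsequence, $u_k \to u_\infty$ and $v_k \to v_\infty$ uniformly on $[-1,0]\times \overline{B_1^+}$, with $\|u_\infty - v_\infty\|_{L^\infty} \ge \eps_0$.

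The next step is to identify the limits. Since $\dv y^a \grad u_k = 0$ in $[-1,0]\times B_1^+$ and $u_k \to u_\infty$ locally uniformly, $u_\infty$ solves the same degenerate elliptic equation in each time slice (this passes to the limit, e.g. via the weak formulation with the uniform Cacciopoli bound \eqref{e:cacciopoli}, or via stability of viscosity solutions). On $B_1^0$, the function $u_k$ satisfies $(u_k)_t + b_k\cdot\grad u_k - \lim_{y\to0}y^a\partial_y u_k = f_k$; since $\|b_k\|_\infty \to 0$ while $\grad u_k$ is controlled, the term $b_k\cdot\grad u_k \to 0$, and $f_k \to 0$, so in the limit $u_\infty$ satisfies $(u_\infty)_t - \lim_{y\to 0}y^a\partial_y u_\infty = 0$, i.e. the drift-less equation \eqref{e:fh-extended}. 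The rigorous way to pass this limit is in the viscosity sense: a smooth test function touching $u_\infty$ from above (or below) can be perturbed to touch $u_k$ from above (or below) at a nearby point, and the drift contribution evaluated there is bounded by $\|b_k\|_\infty \cdot \|\grad \phi\|_\infty \to 0$ — this is exactly the kind of stability argument standard for viscosity solutions, and the definition of viscosity solution for \eqref{e:dd-extended} in Section \ref{s:weaksolutions} is designed to make it work. Likewise $v_k$ solves the drift-less problem with boundary data $u_k \to u_\infty$, so $v_\infty$ solves the drift-less equation with boundary data $u_\infty$.

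Finally, uniqueness for the drift-less Dirichlet problem \eqref{e:perturbed-driflessproblem} — which follows from the comparison principle for $\dv y^a\grad v = 0$ coupled with the Neumann/parabolic structure, or equivalently from the explicit representation via the fractional heat kernel — forces $u_\infty = v_\infty$, contradicting $\|u_\infty - v_\infty\|_{L^\infty} \ge \eps_0$. I expect the main obstacle to be making the passage to the limit in the equation on $B_1^0$ fully rigorous: one must control $\grad u_k$ well enough that $b_k \cdot \grad u_k \to 0$, and one must make sense of the Neumann term $\lim_{y\to0}y^a\partial_y u_k$ stably under uniform convergence. The cleanest route is to work entirely with viscosity solutions as set up in Section \ref{s:weaksolutions} (so that only the comparison principle with smooth barriers is used, as the text emphasizes), in which case no gradient bound on $u_k$ is needed pointwise — only that $\|b_k\|_\infty\to 0$, and the test-function perturbation handles the rest. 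A remark should be added (cf. Remark \ref{r:perturbation}) noting that the identical argument applies to the vanishing-viscosity approximations, since the extra term $\eps\lap u$ is also stable under this limiting procedure.
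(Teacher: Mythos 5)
Your proposal takes a genuinely different route from the paper. The paper's proof is direct and quantitative: using Lemma \ref{l:parabolic-boundary-continuity} for $v$ and the assumed $C^\alpha$ bounds for $u$, together with the fact that $u=v$ on the parabolic boundary, one gets $|u-v|\leq C\rho^{\tilde\alpha}$ on a $\rho$-neighborhood of the parabolic boundary (propagated to the slices $\{t\}\times B_1^+$ by the maximum principle for $\dv y^a\grad(u-v)=0$); then Proposition \ref{p:fh-cinfinity} bounds $\grad v$ in $[-1+\rho,0]\times B^0_{1-\rho}$ by a constant $C_\rho$, so that $v\pm(\eps/2+C\delta t)$ are classical super/subsolutions of the equation satisfied by $u$, and the comparison principle closes the argument with an explicit $\delta(\eps)$. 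The key point is that the gradient paired with $b$ is always $\grad v$, never $\grad u$.

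Your compactness argument, as written, has concrete gaps. First, the claim that ``$\grad u_k$ is controlled'' is false under the hypotheses, which only give $C^\alpha$ bounds; you partially repair this by invoking viscosity-solution stability, but that stability argument (perturbing a test function touching $u_\infty$ so that it touches $u_k$ at a nearby point) requires locally uniform convergence of $u_k$ on sets of the form $[t_0-\eps,t_0]\times\overline{B_\eps^+(x_0)}$, i.e.\ including $y>0$. This is where the more serious gap lies: the hypotheses provide \emph{no} modulus of continuity in $t$ for $y>0$ (only $\norm{u}_{L^\infty([-1,0],C^\alpha(B_1^+))}$), and none can be expected, since for $y>0$ the values of $u(t,\cdot,\cdot)$ are determined in part by the lateral data on $(\partial B_1)^+$, which may oscillate arbitrarily in $t$ --- this is exactly the point made in Section \ref{s:boundaryEstimatesForFHE} and in the remark following Proposition \ref{p:fh-cinfinity}. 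Consequently the Arzel\`a--Ascoli extraction of $u_k\to u_\infty$ and $v_k\to v_\infty$ on $[-1,0]\times\overline{B_1^+}$ fails, and with it the identification of the limit equations and of $v_\infty$ as the drift-less solution with boundary data $u_\infty$. One can partially salvage this by reducing to $y=0$ (the difference $u_k-v_k$ vanishes on $(\partial B_1)^+$, so the slice-wise maximum principle confines everything to $B_1^0$), but then one is forced to subtract the two equations, i.e.\ to manipulate $b_k\cdot\grad u_k$ directly --- precisely the step the paper's barrier argument is built to avoid. A further, minor loss is that the $\delta$ produced by contradiction is non-explicit, whereas the direct proof records its dependence on $\eps$ and the data.
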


\begin{proof}
From Lemma \ref{l:parabolic-boundary-continuity}, the for any $(x,y,t) \in [-1,0] \times B_1^+$ and $(x^*,y^*,t^*)$ as in Lemma \ref{l:parabolic-boundary-continuity}, we have
\[ |v(x,y,t) - v(x^*,y^*,t^*)| \leq C(|x-x^*|+|y-y^*|+|t-t^*|)^{\tilde \alpha}\]
On the other hand, since $\norm{u}_{C^\alpha([-1,0] \times B_1^0)} \leq C$ and $\norm{u}_{L^\infty([-1,0],C^\alpha(B_1^+))} \leq C$, we also have
\[ |u(x,y,t) - u(x^*,y^*,t^*)| \leq C(|x-x^*|+|y-y^*|+|t-t^*|)^{\tilde \alpha}\]
But $v(x^*,y^*,t^*) = u(x^*,y^*,t^*)$. Therefore, if $(x,y,t)$ belongs to either $[-1,0] \times (B_1^+ \setminus B_{1-\rho}^+)$ or $[-1,-1+\delta] \times B_1^0$, we have
\[ |v(x,y,t) - u(x,y,t)| \leq C \rho^{\tilde \alpha}.\]
Moreover, on each time slice $t \in [-1,-1+\delta]$, we use maximum principle for the degenerate elliptic equation
\[ \dv y^a \grad (u-v) = 0 \ \ \text{on } \{t\} \times B_1^+.\]
Then $|u-v| \leq C \rho^{\tilde \alpha}$ in $[-1,-1+\delta] \times B_1^+$.
Choosing $\rho$ small enough, we can assure that $|u-v| < \eps/2$ in $([-1,0] \times B_1^+) \setminus ([-1+\rho,0] \times B_{1 - \rho}^+)$.

From Proposition \ref{p:fh-cinfinity}, $\grad v$ is bounded in $[-1+\rho,0] \times B^0_{1-\rho}$ with a bound depending only on dimension, $a$, $\rho$ and $\norm{v}_{L^\infty} = 1$. Therefore
\[ v_t(t,x,0) + b \cdot \grad v- \lim_{y \to 0} y^a \partial_y v(t,x,y) \leq C \delta \ \ \text{ in } [-1+\rho,0] \times B^0_{1-\rho}\]

We can conclude using maximum principle that $\tilde v = v - \eps/2 - C \delta t \leq u$. Indeed, we know that $\tilde v \leq u$ on the boundary and complement of $[-1+\rho,0] \times B_{1-\rho}$, whereas inside we have
\[ \tilde v_t(t,x,0) + b \cdot \grad \tilde v- \lim_{y \to 0} y^a \partial_y \tilde v(t,x,y) \leq -\delta. \]
Similarly, we prove that $v + \eps/2 + C \delta t \geq u$. We finish the proof by choosing $\delta$ small so that $C \delta < \eps/2$.
\end{proof}

\begin{remark} \label{r:perturbation0}
Note that the fact that $u$ solves the equation \eqref{e:dd-extended} is used only to apply the comparison principle with the classical subsolution $\bar v$, thus $u$ only needs to satisfy \eqref{e:dd-extended} in the viscosity sense. 
\end{remark}

\begin{remark} \label{r:perturbation}
The proof of Lemma \ref{l:perturbation} adapts with little work to the vanishing viscosity framework. Indeed, if the equation for $u$ is
\begin{equation} \label{e:perturbed-viscosity}
\begin{aligned}
u_t + b \cdot \grad u- (\lim_{y \to 0} y^a \partial_y u) - \eps_0 \lap u &= f && \text{on } (-1,0] \times B_1^0 \\
\dv y^a \grad u &= 0 && \text{in } [-1,0] \times B_1^+
\end{aligned}
\end{equation}
then under the same hypothesis plus $\eps_0 < \delta$, the solution to \eqref{e:perturbed-driflessproblem} approximates \eqref{e:perturbed-viscosity}. The only modification in the proof is that when we analyze the equation for $v$ we also have the term $\eps_0 \lap v$, which is small since by Proposition \ref{p:fh-cinfinity}, $v$ is $C^2$ in $x$.
\end{remark}

\section{The improvement of flatness lemma}
\label{s:improvementOfFlatness}
Our main regularity result is proved iteratively by showing that the values of the solution separate less and less from a plane in smaller scales. In each iteration we use the essential \emph{improvement of flatness} lemma which is stated and proved below.

\begin{lemma} \label{l:improvement-of-flatness}
Let $\alpha$ be any positive number less than $2s$, $b \in C^{1-2s}$, and $u$ be a continuous function that solves 
\begin{equation} \label{e:p-extended}
\begin{aligned}
u_t + b \cdot \grad u- \lim_{y \to 0} y^a \partial_y u(t,x,y) &= f && \text{on } (-1,0] \times B_1^0 \\
\dv y^a \grad u &= 0 && \text{in } [-1,0] \times B_1^+
\end{aligned}
\end{equation}
Assume that $\norm{u}_{L^\infty} \leq 1$ and $\norm{b}_{L^\infty} \leq \delta$ and $||f||_{L^\infty} < \delta$. Then, if $\delta$ is small enough, there is a radius $r>0$, a vector $A \in \R^n$ and a $C^1$ function $D:[-r^{2s},0] \to \R$ such that
\begin{equation} \label{e:flatter}
\sup_{[-r^{2s},0] \times B_r^+} |u(t,x,y) - A \cdot x - D(t) - \frac {D'(t)} {1-a} y^{1-a}| \leq r^{1+\alpha}
\end{equation}
Moreover, $|A|$ is bounded by a universal constant $C$ depending only on dimension, $s$, and $||b||_{C^{1-2s}}$ and so are $|D(t)|$ and $|D'(t)|$.
\end{lemma}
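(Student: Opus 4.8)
The plan is a perturbative argument in the spirit of Caffarelli--Cordes--Nirenberg: on the unit cylinder, approximate $u$ by the solution $v$ of the drift-less problem \eqref{e:perturbed-driflessproblem} with the same parabolic boundary data, and then read off the affine-plus-$y^{1-a}$ behaviour of $u$ from the smoothness of $v$ provided by Proposition \ref{p:fh-cinfinity} and Lemma \ref{l:ext-flocal}. The improvement \eqref{e:flatter} then follows because the errors in the expansion of $v$ are of order strictly higher than $1+\alpha$, this being where $\alpha<2s$ enters.

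First I would arrange the hypotheses of Lemma \ref{l:perturbation}. Since $b\in C^{1-2s}$ in $x$ and $f\in L^\infty$, Proposition \ref{p:holder-local-extended} gives $\norm{u}_{C^\beta([-1/2,0]\times B_{1/2}^0)}+\norm{u}_{L^\infty([-1/2,0],C^\beta(B_{1/2}^+))}\le C_0$ for some $\beta>0$ and $C_0$ depending only on $n$, $s$, $\norm{b}_{C^{1-2s}}$. A parabolic rescaling $u(t,x,y)\mapsto u(\lambda^{2s}t,\lambda x,\lambda y)$ with a fixed $\lambda\in(0,1/2]$ promotes this to the same bound on the unit cylinder $[-1,0]\times B_1^+$; it preserves $\norm{u}_{L^\infty}\le1$, multiplies $\norm{b}_{L^\infty}$ and $\norm{f}_{L^\infty}$ only by the harmless fixed factor $\lambda^{2s-1}\ge1$ (here $s\le1/2$ is used), and --- because $a=1-2s$, i.e. $1-a=2s$ --- is compatible with the exact shape of \eqref{e:flatter}, the term $\frac{D'(t)}{1-a}y^{1-a}$ scaling exactly like $D(t)$. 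So we may assume that Hölder bound on the unit cylinder and work there; at the end the $r,A,D$ produced transfer back to the original coordinates, with $r$ shrunk by a factor $\lambda$ (which also absorbs the constant $\lambda^{1+\alpha}$ in \eqref{e:flatter}) and $A$, $D'$ worsened only by fixed powers of $\lambda$. Let $v$ solve \eqref{e:perturbed-driflessproblem}; by the maximum principle $\norm{v}_{L^\infty}\le1$.

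Next I would expand $v$ near the origin (after a harmless further localization so that the trace of $v$ is smooth on the relevant ball). By Proposition \ref{p:fh-cinfinity}, $v$ and $v_t$ are $C^\infty$ in $x$ with all $x$-derivatives bounded on $B_{1/2}^0$ by constants depending on $n$, $s$ only; in particular $x\mapsto v_t(t,x,0)$ is Lipschitz and $t\mapsto\grad_x v(t,0,0)$ is Lipschitz. Applying Lemma \ref{l:ext-flocal} for each fixed $t$ to $v(t,\cdot,\cdot)$ (whose trace is $C^{1,1}$ with a universal bound and which, being bounded, lies in $L^2(y^a)$), and using the Remark after that lemma together with the equation $v_t=\lim_{y\to0}y^a\partial_y v$ to identify the coefficient of $y^{1-a}$ as $\frac{1}{1-a}v_t(t,x,0)$, we get $v(t,x,y)=v(t,x,0)+\frac{1}{1-a}v_t(t,x,0)\,y^{1-a}+O(y^2)$ near the origin, with a universal constant. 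Set $A=\grad_x v(0,0,0)$ and $D(t)=v(t,0,0)$, so $D\in C^1$, $D'(t)=v_t(t,0,0)$, $|D(t)|\le1$, and $|A|,|D'(t)|\le C$ by Proposition \ref{p:fh-cinfinity}. Taylor expanding $v(t,\cdot,0)$ in $x$, $\grad_x v(\cdot,0,0)$ in $t$, and $v_t(t,\cdot,0)$ in $x$, all around $0$, then gives near the origin
\begin{equation*}
\Big|\,v(t,x,y)-A\cdot x-D(t)-\tfrac{D'(t)}{1-a}y^{1-a}\,\Big|\le C\big(|x|^2+|t|\,|x|+y^2+|x|\,y^{1-a}\big)
\end{equation*}
with $C=C(n,s)$.

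Finally I would choose the constants. On $[-r^{2s},0]\times B_r^+$ the right side above is at most $C(r^2+r^{1+2s})$, using $|x|\le r$, $y\le r$, $|t|\le r^{2s}$ and $2-a=1+2s$; since $\alpha<2s\le1$ both exponents $2$ and $1+2s$ exceed $1+\alpha$, so there is $r=r(n,s,\alpha)$ with $C(r^2+r^{1+2s})\le\tfrac12\lambda^{1+\alpha}r^{1+\alpha}$. Fixing this $r$, put $\eps=\tfrac12\lambda^{1+\alpha}r^{1+\alpha}$ and let $\delta$ be the threshold from Lemma \ref{l:perturbation} (depending on $\eps$, $C_0$, $\beta$); if $\norm{b}_{L^\infty},\norm{f}_{L^\infty}<\delta$ then $\norm{u-v}_{L^\infty}<\eps$, hence on $[-r^{2s},0]\times B_r^+$
\begin{equation*}
\Big|\,u(t,x,y)-A\cdot x-D(t)-\tfrac{D'(t)}{1-a}y^{1-a}\,\Big|\le\eps+\tfrac12\lambda^{1+\alpha}r^{1+\alpha}=\lambda^{1+\alpha}r^{1+\alpha},
\end{equation*}
which, after undoing the rescaling, is exactly \eqref{e:flatter}. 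The one place where genuine care is needed is this order bookkeeping: the ``next harmonic polynomial'' of the extension has degree $2-a=1+2s$, which is what forces the cross term $|x|\,y^{1-a}\sim r^{1+2s}$ in the remainder, and the hypothesis $\alpha<2s$ is precisely what lets it be absorbed into $r^{1+\alpha}$. The step I expect to be fussiest to write out is the reduction of the first part --- checking that the $B_{1/2}$-estimate of Proposition \ref{p:holder-local-extended} renormalizes to $B_1$ with $\delta$ still small and $C_0$ still universal, and that $1-a=2s$ makes \eqref{e:flatter} scale-covariant.
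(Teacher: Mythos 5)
Your proposal is correct and follows essentially the same route as the paper: approximate $u$ by the drift-less solution $v$ with the same boundary data via Lemma \ref{l:perturbation} (whose hypotheses are supplied by Proposition \ref{p:holder-local-extended}), expand $v$ near the origin using Proposition \ref{p:fh-cinfinity} and Lemma \ref{l:ext-flocal} with $A=\grad_x v(0,0,0)$, $D(t)=v(t,0,0)$, and absorb the $O(r^{1+2s})$ remainder into $r^{1+\alpha}$ before fixing $\delta$. Your explicit rescaling from $B_{1/2}$ back to the unit cylinder is a more careful rendering of a step the paper leaves implicit, and the rest matches the paper's argument term for term.
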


The idea of the proof is to use Lemma \ref{l:perturbation} from the previous section to show that $u$ differs very little from a solution $v$ to the drift-less problem with zero right hand side. This function $v$ is smooth, so $u$ is close to a smooth function and the result follows.

\begin{proof}[Proof of Lemma \ref{l:improvement-of-flatness}]
From Proposition \ref{p:holder-local-extended}, the function $u$ is H\"older continuous in $[-1/2,0] \times B_{1/2}^0$ and on $\{t\} \times \overline{B_{1/2}^+}$ for all $t \in [-1/2,0]$. 

Let $v$ be the solution to the fractional heat equation in $[-1/2,0] \times B_{1/2}^+$ with $u$ as boundary values:
\begin{equation} \label{e:p-driflessproblem}
\begin{aligned}
v_t(t,x,0) - \lim_{y \to 0} y^a \partial_y v(t,x,y) &= 0 && \text{on } (-1/2,0] \times B_{1/2}^0\\
\dv y^a \grad v &= 0 && \text{in } [-1/2,0] \times B_{1/2}^+ \\
v(t,x,y) &= u(t,x,y) && \text{on } ([-1/2,0] \times \partial B_1^+) \cup (\{-1/2\} \times B_{1/2}^+).
\end{aligned}
\end{equation}

From Lemma \ref{l:perturbation}, $|u-v|\leq \eps$ in $[-1/4,0] \times B_{1/4}^+$, where $\eps$ is arbitrarily small depending on the choice of $\delta$.

From Proposition \ref{p:fh-cinfinity}, the function $v$ is $C^\infty$ respect to $x$, and $\partial_t v$ and $\partial_t \grad v$ are bounded in the interior of $[-1/2,0] \times B_{1/2}^+$ with estimates depending on the oscillation of $v$, which is less than one. In particular, if $A = \grad v(0,0,0)$ and $D = v(t,0,0)$, $v$ satisfies
\[ |v(t,x,0) - D(t) - A \cdot x| \leq C(|x||t| + |x|^2) \leq C r^{1+2s} \qquad \text{in } [-r^{2s},0] \times B_r^+. \]

Also from Proposition \ref{p:fh-cinfinity}, $D'(t)$ is bounded depending only on dimension and $s$.

Since $v$ satisfies the equation \eqref{e:p-driflessproblem}, then $D'(t) = \partial_t v(t,0,0) = \lim_{y \to 0} y^a \partial_y v(t,0,y)$. From Lemma \ref{l:ext-flocal}, $v$ has an expansion of the following form in $[-r^{2s},0] \times B_r^+$.
\[ \begin{split}
v(t,x,y) &= v(t,x,0) + \frac{y^{1-a}}{1-a} \lim_{y \to 0} y^a \partial_y v(t,x,y) + O(y^2) \\
&= A \cdot x + D(t) + \frac{D'(t)}{1-a} y^{1-a} + O(r^{1+2s})
\end{split}
\]
Meaning that the last error term $O(r^{1+2s})$ is bounded by $C r^{1+2s}=Cr^{2-a}$ for some constant $C$ independent of $u$ and $v$.

Therefore, if $\alpha < 2s$, we can choose $r$ small such that 
\[ \begin{split}
|v(t,x,y) -  D(t) - A \cdot x - \frac {D'(t)} {1-a} y^{1-a} | &\leq C r^{1+2s} \\
&\leq \frac 1 2 r^{1+\alpha}
\end{split}\]

Now we choose $\delta$ so that the $\eps$ that we obtained from Lemma \ref{l:perturbation} is less than $\frac 1 4 r^{1+\alpha}$. So we obtain
\[ |u(t,x,y) -  D(t) - A \cdot x - \frac {D'(t)} {1-a} y^{1-a}| \leq \frac 12 r^{1+\alpha} + 2 \eps \leq r^{1+\alpha}
\]
which finishes the proof.
\end{proof}

\section{The proof of the main theorem}
\label{s:mainProof}
We start the proof of our main theorem by proving the result in the case $b(t,0)=0$ and $f(t,0)=0$. The proof of the general case follows below and consist of combining this lemma with a change of variables following the flow of the vector field.

\begin{lemma} \label{l:main}
Let $\alpha \in (0,2s)$, and $u$ be a solution of
\begin{equation} \label{e:extended}
\begin{aligned}
u_t + b \cdot \grad u- \lim_{y \to 0} y^a \partial_y u(t,x,y) &= f && \text{on } (-1,0] \times B_1^0 \\
\dv y^a \grad u &= 0 && \text{in } [-1,0] \times B_1^+
\end{aligned}
\end{equation}
Assume $u(0,0,0)=0$, $||u||_{L^\infty([-1,0] \times B_1^+)} \leq 1$ and
\begin{equation} \label{e:b-small-all-scales}
\sup_{(t,x) \in [-1,0] \times B_1} \left( \frac{|b(t,x)|}{|x|^{1-2s+\alpha}} \right) \leq \delta \ \ \text{ and } \ \sup_{(t,x) \in [-1,0] \times B_1} \left( \frac{|f(t,x)|}{|x|^{1-2s+\alpha}} \right) \leq \delta
\end{equation}
(in particular $b(t,0)=0$ and $f(t,0)=0$ for all $t$). Then there exist $A \in \R^n$ and $D:[-1/2,0] \to \R$ such that
\begin{equation} \label{e:estimate-at-zero}
\abs{u(t,x,y) -  A \cdot x - D(t) - \frac{D'(t)}{1-a} y^{1-a}} \leq C (|x|+y+t^{\frac 1{2s}})^{1+\alpha}
\end{equation}
\end{lemma}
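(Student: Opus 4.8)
\textbf{Proof plan for Lemma \ref{l:main}.}

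The plan is to iterate the improvement-of-flatness Lemma \ref{l:improvement-of-flatness} at dyadic scales $r^k$, tracking a sequence of affine-plus-$y^{1-a}$ approximations, and to show the sequence of coefficients converges geometrically. First I would fix $r$ and $\delta$ from Lemma \ref{l:improvement-of-flatness} (shrinking $\delta$ if necessary; note that the smallness hypotheses on $b$ and $f$ in \eqref{e:b-small-all-scales} are \emph{scale invariant} in exactly the right way, which is the whole point of assuming $C^{1-2s+\alpha}$ rather than $C^{1-2s}$). The claim to prove by induction on $k$ is that there exist $A_k \in \R^n$ and $C^1$ functions $D_k : [-r^{2sk},0] \to \R$ with $|A_{k+1}-A_k| \leq C r^{\alpha k}$, $\sup_t|D_{k+1}(t)-D_k(t)| \leq C r^{(1+\alpha)k}$, $\sup_t |D_{k+1}'(t)-D_k'(t)| \leq C r^{\alpha k}$, and the flatness bound
\[
\sup_{[-r^{2sk},0] \times B_{r^k}^+} \left| u(t,x,y) - A_k \cdot x - D_k(t) - \frac{D_k'(t)}{1-a} y^{1-a} \right| \leq r^{(1+\alpha)k}.
\]

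The inductive step is the routine part: given the $k$-th approximation, set $\tilde u(t,x,y) = r^{-(1+\alpha)k}\big(u(r^{2sk}t, r^k x, r^k y) - A_k \cdot r^k x - D_k(r^{2sk}t) - \tfrac{D_k'(r^{2sk}t)}{1-a}(r^k y)^{1-a}\big)$. One checks, using that $\frac{y^{1-a}}{1-a}$ is exactly the extension object whose Neumann data is $1$ and the list of ``harmonic polynomials'' in Section \ref{s:extension}, that $\tilde u$ solves an equation of the form \eqref{e:p-extended} with a new drift $\tilde b(t,x) = r^{k(2s-1)} b(r^{2sk}t, r^k x)$ and new right-hand side $\tilde f$ collecting the rescaled $f$ together with the terms $\tilde b \cdot A_k$ and the mismatch $\tilde b \cdot \grad(\text{of the }y^{1-a}\text{ correction, which vanishes at }y=0)$; the hypothesis \eqref{e:b-small-all-scales} gives $\|\tilde b\|_{L^\infty(B_1)} \leq \delta r^{k\alpha}$ and, after absorbing the $|A_k| \leq C$ bound, $\|\tilde f\|_{L^\infty} \leq C\delta$, which is $\leq \delta'$ for the smallness threshold of Lemma \ref{l:improvement-of-flatness} provided $\delta$ was chosen small. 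Also $\|\tilde u\|_{L^\infty} \leq 1$ by the induction hypothesis. Applying Lemma \ref{l:improvement-of-flatness} to $\tilde u$ produces the correction $\delta A$, $\delta D(t)$, $\delta D'(t)$; scaling back and setting $A_{k+1} = A_k + r^{\alpha k}\delta A$, $D_{k+1}(t) = D_k(t) + r^{(1+\alpha)k}\delta D(r^{-2sk}t)$ gives the next approximation with the stated increment bounds (the $D_k'$ increment picks up an extra $r^{-2sk}$ from the chain rule against $r^{(1+\alpha)k}$, leaving $r^{(1+\alpha-2s)k} = r^{(\alpha-(2s-1))k}$... more simply, differentiating $r^{(1+\alpha)k}\delta D(r^{-2sk}t)$ gives $r^{(1+\alpha-2s)k}\delta D'$, and since $1+\alpha-2s = \alpha - (2s-1) $ and $0 < 2s \le 1$ one has $1+\alpha -2s \ge \alpha$, so the increment is $\le C r^{\alpha k}$). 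Summing the geometric series, $A_k \to A$ and $D_k \to D$ uniformly with $D$ still $C^1$, and the telescoped error at a point $(t,x,y)$ with $(|x|+y+t^{1/2s}) \sim r^k$ gives \eqref{e:estimate-at-zero} with the exponent $1+\alpha$; intermediate scales are handled by monotonicity of the scales as usual.

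The main obstacle I expect is bookkeeping the $y^{1-a}$ correction term correctly through the rescaling: because the correction is $D'(t) y^{1-a}/(1-a)$ and not linear in $y$, one must verify (a) that subtracting it still leaves a function solving $\dv y^a \grad(\cdot) = 0$ in the interior — this works precisely because $y^{1-a}$ and $y^{1-a}A\cdot x$ are on the list of admissible ``harmonic'' functions, so $D(t)+\frac{D'(t)}{1-a}y^{1-a}$ is annihilated for each frozen $t$ — and (b) that its contribution to the boundary equation is exactly $D'(t)$ via $\lim_{y\to0} y^a \partial_y$, matching the $\partial_t$ of the $D(t)$ term, so that the residual equation for $\tilde u$ has no leftover source beyond $\tilde b$ and $\tilde f$. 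The other point needing care is confirming that the $D_k'$ increments are genuinely summable; this is where the constraint $\alpha < 2s$ together with $2s \le 1$ matters, and it should be checked explicitly rather than waved through. Everything else — the scale-invariance of \eqref{e:b-small-all-scales}, the geometric convergence, passing from dyadic to continuous scales — is standard Campanato-type iteration.
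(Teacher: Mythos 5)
Your proposal is correct and follows essentially the same route as the paper: rescale by $r^k$, apply Lemma \ref{l:improvement-of-flatness} to the rescaled function (whose drift and right-hand side stay small by the scale invariance of \eqref{e:b-small-all-scales}, with the extra source $\tilde b\cdot A_k$ absorbed via the uniform bound on $A_k$), and sum the geometric increments, using $a=1-2s\ge 0$ to make the $D_k'$ increments summable. Your bookkeeping of the $D$ versus $D'$ increments is in fact slightly more careful than the paper's.
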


\begin{proof}
Let $\rho>0$ be the radius $r$ of Lemma \ref{l:improvement-of-flatness}. We will prove iteratively that there exists $A_k$ and $D_k$ uniformly bounded such that
\begin{equation} \label{e:iterative-inequality}
\sup_{(t,x,y) \in [-\rho^{2sk},0]\times B_{\rho^k}^+} \abs{u(t,x,y) -  A_k \cdot x - D_k(t) - \frac {D_k'(t)} {1-a} y^{1-a}} \leq \rho^{k(1+\alpha)}.
\end{equation}

For $k=0$, we choose $A=0$ and $D=0$ and \eqref{e:iterative-inequality} holds. Now, let us assume we have shown \eqref{e:iterative-inequality} up to some value of $k$ and prove it for $k+1$.

Let \[v(t,x,y) = \rho^{-k(1+\alpha)} \left(u(\rho^{2sk} t, \rho^k x, \rho^k y) - \rho^k \, A_k \cdot x - D(\rho^{2sk} t) - \rho^{-2sk} \frac {D'(\rho^{2sk} t)} {1-a} y^{1-a}\right)\] and $\tilde b = \rho^{(2s-1)k} b(\rho^{2sk} t, \rho^k x)$ and $\tilde f=\rho^{k(2s-1-\alpha)} f(\rho^{2sk}t,\rho^k x)$. We have that $v$ solves
\begin{equation} \label{e:scaled}
\begin{aligned}
v_t(t,x,0) + \tilde b \cdot \grad v - \lim_{y \to 0} y^a \partial_y v(t,x,y) &= \tilde f - \rho^{-k \alpha} \tilde b \cdot A_k&& \text{on } (-1,0] \times B_1^0, \\
\dv y^a \grad v &= 0 && \text{in } [-1,0] \times B_1^+.
\end{aligned}
\end{equation}
Note that $\tilde f - \rho^{-k \alpha} \tilde b \cdot A_k$ is small because of the assumption \eqref{e:b-small-all-scales} and the fact that $A_k$ is uniformly bounded (independently of $\delta$).

Note that the term $- D(t)$ adds a $-D'(t)$ to the derivative of $v$ respect to time, which is compensated by the term $- \frac {D'(t)} {1-a} y^{1-a}$ which does the same thing with the boundary derivative $\lim_{y \to 0} y^a \partial_y v(t,x,y)$.

From the inductive hypothesis, we know that $|v| \leq 1$ in $[-1,0] \times B_1^+$ and from \eqref{e:b-small-all-scales}, we have that $|\tilde b| \leq \delta$  and $|\tilde f|<\delta$ in $[-1,0] \times B_1^+$. We can apply Lemma \ref{l:improvement-of-flatness} to $v$ to obtain that there are $A$ and $D$ uniformly bounded such that
\[ \abs{v(t,x,y) - v(0,0,0) - A \cdot x - D(t) - \frac {D'(t)} {1-a} y^{1-a}} \leq \rho^{1+a}. \]
Therefore \eqref{e:iterative-inequality} holds for $k+1$ with $A_{k+1} = A_k + \rho^{k\alpha} A$ and $D_{k+1} = D_k + \rho^{k(\alpha+a)} D$. This finishes the proof of \eqref{e:iterative-inequality} by induction in $k$.

Note that by construction
\begin{align*}
|A_k| &\leq C \sum_{i<k} \rho^{\alpha i} \\
|D_k|, |D_k'|  &\leq C \sum_{i<k} \rho^{(\alpha+a) i}
\end{align*}
So $|A_k|$ and $|D_k|$ are bounded by a convergent geometric series, and then they stay bounded uniformly in $k$. Moreover, if we set
\begin{align*}
A &= \lim_{k \to \infty} A_k \\
D &= \lim_{k \to \infty} D_k
\end{align*}
then, also by the geometric series bound, $|A_k - A| \leq C \rho^{-k\alpha}$ and $|D_k - D| \leq C \rho^{-k(\alpha+a)}$. Therefore, \eqref{e:iterative-inequality} implies \eqref{e:estimate-at-zero} and we finish the proof. 
\end{proof}

\begin{remark}
In the proof of Lemma \ref{l:main}, we see the importance of rewriting the equation \eqref{e:dd} using the extension as in \eqref{e:dd-extended}. The slope with respect to time $Dt$ is compensated by subtracting $Dy^{1-a}/(1-a)$ in the extended (artificial) variable. The terms fit together nicely and the error terms from the nonlocal operators are completely avoided. Indeed, in this supercritical case, those error terms would be very difficult to control without using the extension.
\end{remark}

\begin{remark}
If $\alpha=0$, Lemma \ref{l:main} still applies with $A=0$, and all $A_k$ in the proof equal to zero as well.
\end{remark}

We now have all the results needed to prove the main theorem.

\begin{proof} [Proof of Theorem \ref{t:main}]
Let $u$ be a bounded solution to \eqref{e:dd} in $[-1,0] \times \R^n$. By the usual translation and dilation argument, it is enough to show that the estimate holds at the point $(0,0)$. We will prove that there is a vector $A \in \R^n$ such that
\begin{equation} \label{e:m1}
|u(0,x) - u(0,0) - A \cdot x| \leq C |x|^{1+\alpha} \left( ||u||_{L^\infty} + \frac{r^{1+\alpha}} \delta ||f||_{C^{1-2s+\alpha}} \right)
\end{equation}
where $\delta$ is the constant from Lemma \ref{l:main} and $r$ depends on $\norm{b}_{C^{1+2s-\alpha}}$.

We first normalize the solution by considering
\[ \begin{aligned}
\tilde u &= \frac {u(r^{2s} t,rx) - u(0,0)} {||u||_{L^\infty} + \frac{r^{1+\alpha}} \delta ||f||_{C^{1-2s+\alpha}}}, \\ 
\tilde f &= \frac {r^{2s} f(r^{2s} t,rx)} {||u||_{L^\infty}+\frac{r^{1+\alpha}} \delta ||f||_{C^{1-2s+\alpha}}} ,\\
\tilde b &= \frac {r^{2s-1} b(r^{2s} t,rx)} {||u||_{L^\infty}+\frac{r^{1+\alpha}} \delta ||f||_{C^{1-2s+\alpha}}}.
\end{aligned} \]
In this way, $||\tilde u||_{L^\infty} \leq 1$, and also $||\tilde f||_{C^{1-2s+\alpha}} \leq \delta$ and $||\tilde b||_{C^{1-2s+\alpha}} \leq \delta$ if $r$ is small.

Note that $\tilde u$, $\tilde b$ and $\tilde f$ also satisfy the equation \eqref{e:dd}. We would finish the proof if we could apply Lemma \ref{l:main} to $\tilde u$, $\tilde b$ and $\tilde f$. We extend $\tilde u$ and $\tilde f$ to the upper half space. Note that the extension of $u$ has the same $L^\infty$ norm as $u$ by the maximum principle (indeed, all that is needed is that the extension of $u$ is bounded in $[-1,0] \times B_1^+$). The only hypothesis we are missing in order to apply Lemma \ref{l:main} are \eqref{e:b-small-all-scales}. We will make another change of variables.

We solve the following ODE backwards in time:
\begin{align*}
V(0) &= 0, \\
\dot V(t) &= b(t,V(t))  \ \ \text{for } t \in (-1,0).
\end{align*}

Since $b$ is continuous, the ODE has at least one solution. Moreover, $|\dot V| \leq C$ since $b$ is bounded.

We also define \[ S(t) = \int_t^0 \tilde f(s,0) \dd s, \] which is clearly the solution of
\begin{align*}
S(0) &= 0 \\
\dot S(t) &= -f(t,0) 
\end{align*}

Therefore we now consider
\begin{align*}
 u^*(t,x,y) &= \tilde u(t,x+V(t),y) - S(t) \\
 b^*(t,x) &= b(t,x+V(t)) - b(t,V(t)) \\
f^*(t,x) &= \tilde f(t,x) - f(t,0).
\end{align*}
And they solve the equation
\begin{align*}
u^*_t + b^* \cdot \grad u^* - \lim_{y \to 0} y^a \partial_y u^* &= f^* \ \ \text{in } (-1,0] \times B_1^0 \\
\dv y^a \grad u^* &= 0 \ \ \text{in } [-1,0] \times B_1^+
\end{align*}
Moreover, since $||\tilde f||_{C^{1-2s+\alpha}} \leq \delta$ and $||\tilde b||_{C^{1-2s+\alpha}} \leq \delta$, then $f^*$ and $b^*$ satisfy \eqref{e:b-small-all-scales}. So, we can apply Lemma \ref{l:main} to $u^*$ to obtain that there is $A_* \in \R^n$ and $D_*:[-1/2,0] \to \R$ such that
\[ \abs{u^*(t,x,y) - u^*(0,0,0) -  A_* \cdot x - D_*(t) -  \frac {D_*'(t)} {1-a} y^{1-a}} \leq C (|x|+y+t^{\frac 1{2s}})^{1+\alpha}. \]
But note that $\tilde u(0,x,y) = u^*(0,x,y)$ and \[ u(0,x,y) = \left( ||u||_{L^\infty} + \frac{r^{1+\alpha}} \delta ||f||_{C^{1-2s+\alpha}} \right) v^*(0,r^{-1} x, r^{-1} y) \]
Therefore, if we set $A = A^* \left( ||u||_{L^\infty} + \frac{r^{1+\alpha}} \delta ||f||_{C^{1-2s+\alpha}} \right) r^{-1}$, we obtain
\[ \abs{u(0,x,0) - u^*(0,0,0) -  A \cdot x} \leq C_r |x|^{1+\alpha} \left( ||u||_{L^\infty} + ||f||_{C^{1-2s+\alpha}} \right), \]
which finishes the proof (recall that $r$ depends on $||b||_{C^{1-2s+\alpha}}$).
\end{proof}

\begin{proof}[Proof of Theorem \ref{t:main2}]
Repeating the proof of Theorem \ref{t:main} but with $\alpha=0$ and $A=0$, Theorem \ref{t:main2} follows. 
The smallness of $\tilde b$ follows from \eqref{e:mainassumption2} for small $r$.
\end{proof}


\begin{remark}
If we follow the iteration in the proof of Theorem \ref{t:main} in the vanishing viscosity framework, we see that the artificial viscosity term $\eps \lap u$ gets magnified in each successive scaling by $\rho^{k(s-1)}$. For a large value of $k$, this value would be above the threshold to apply Lemma \ref{l:perturbation}. But at that point the second order diffusion takes over, and the rest of the iteration is trivial using the regularization of the perturbed second order heat equation.
\end{remark}

\bibliographystyle{plain}   
\bibliography{pafd}             

\begin{thebibliography}{10}

\bibitem{caffarelli2007extension}
L.~Caffarelli and L.~Silvestre.
\newblock {An extension problem related to the fractional Laplacian}.
\newblock {\em Communications in partial differential equations},
  32(8):1245--1260, 2007.

\bibitem{caffarelli2009approx}
L.~Caffarelli and L.~Silvestre.
\newblock Regularity results for nonlocal equations by approximation.
\newblock {\em Archive for Rational Mechanics and Analysis}, pages 1--30, 2009.

\bibitem{caffarelli2006drift}
Luis~A. Caffarelli and Alexis Vasseur.
\newblock Drift diffusion equations with fractional diffusion and the
  quasi-geostrophic equation.
\newblock {\em Ann. of Math. (2)}, 171(3):1903--1930, 2010.

\bibitem{chan2009eventual}
C.H. Chan, M.~Czubak, and L.~Silvestre.
\newblock {Eventual regularization of the slightly supercritical fractional
  Burgers equation}.
\newblock {\em Discrete and Continuous Dynamical Systems (DCDS-A)},
  27(2):847--861, 2010.

\bibitem{constantin2008regularity}
P.~Constantin and J.~Wu.
\newblock {Regularity of H{\"o}lder continuous solutions of the supercritical
  quasi-geostrophic equation}.
\newblock In {\em Annales de l'Institut Henri Poincare (C) Non Linear
  Analysis}, volume~25, pages 1103--1110. Elsevier, 2008.

\bibitem{gautam2008}
Peter Constantin, Gautam Iyer, and Jiahong Wu.
\newblock Global regularity for a modified critical dissipative
  quasi-geostrophic equation.
\newblock {\em Indiana Univ. Math. J.}, 57(6):2681--2692, 2008.

\bibitem{cordes1956}
Heinz~Otto Cordes.
\newblock \"{U}ber die erste {R}andwertaufgabe bei quasilinearen
  {D}ifferentialgleichungen zweiter {O}rdnung in mehr als zwei {V}ariablen.
\newblock {\em Math. Ann.}, 131:278--312, 1956.

\bibitem{dabkowskieventual}
M.~Dabkowski.
\newblock Eventual regularity of the solutions to the supercritical dissipative
  quasi-geostrophic equation.
\newblock {\em Geometric And Functional Analysis}, pages 1--13.

\bibitem{MR2259335}
J{\'e}r{\^o}me Droniou and Cyril Imbert.
\newblock Fractal first-order partial differential equations.
\newblock {\em Arch. Ration. Mech. Anal.}, 182(2):299--331, 2006.

\bibitem{FKJ}
E.~B. Fabes, C.~E. Kenig, and D.~Jerison.
\newblock Boundary behavior of solutions to degenerate elliptic equations.
\newblock In {\em Conference on harmonic analysis in honor of Antoni Zygmund,
  Vol. I, II (Chicago, Ill., 1981)}, Wadsworth Math. Ser., pages 577--589.
  Wadsworth, Belmont, CA, 1983.

\bibitem{FKS}
Eugene~B. Fabes, Carlos~E. Kenig, and Raul~P. Serapioni.
\newblock The local regularity of solutions of degenerate elliptic equations.
\newblock {\em Comm. Partial Differential Equations}, 7(1):77--116, 1982.

\bibitem{kiselev2011nonlocal}
A.~Kiselev.
\newblock Nonlocal maximum principles for active scalars.
\newblock {\em Advances in Mathematics}, 2011.

\bibitem{kiselev2010variation}
A.~Kiselev and F.~Nazarov.
\newblock {Variation on a theme of caffarelli and vasseur}.
\newblock {\em Journal of Mathematical Sciences}, 166(1):31--39, 2010.

\bibitem{nirenberg1954}
L.~Nirenberg.
\newblock On a generalization of quasi-conformal mappings and its application
  to elliptic partial differential equations.
\newblock In {\em Contributions to the theory of partial differential
  equations}, Annals of Mathematics Studies, no. 33, pages 95--100. Princeton
  University Press, Princeton, N. J., 1954.

\bibitem{priola2010pathwise}
E.~Priola.
\newblock Pathwise uniqueness for singular sdes driven by stable processes.
\newblock {\em Arxiv preprint arXiv:1005.4237}, 2010.

\bibitem{silvestre2009eventual}
L.~Silvestre.
\newblock {Eventual regularization for the slightly supercritical
  quasi-geostrophic equation}.
\newblock {\em Annales de l'Institut Henri Poincare (C) Non Linear Analysis},
  27(2):693--704, 2009.

\bibitem{silvestrePreprintHolder}
L.~Silvestre.
\newblock {H{\"o}lder estimates for advection fractional-diffusion equations}.
\newblock {\em Annali della Scuola Normale Superiore di Pisa. Classe di
  Scienze.}, To appear.

\bibitem{silvestre2009differentiability}
Luis Silvestre.
\newblock On the differentiability of the solution to the {H}amilton-{J}acobi
  equation with critical fractional diffusion.
\newblock {\em Adv. Math.}, 226(2):2020--2039, 2011.

\end{thebibliography}
\index{Bibliography@\emph{Bibliography}}%

\end{document}